\newtheorem{theorem}{Theorem}
\theoremstyle{plain}
\newtheorem{conclusion}{Conclusion}
\newtheorem{corollary}{Corollary}
\newtheorem{definition}{Definition}
\newtheorem{example}{Example}
\newtheorem{lemma}{Lemma}
\newtheorem{proposition}{Proposition}
\newtheorem{remark}{Remark}
\numberwithin{equation}{section}
\begin{document}
	
	\title[ $\mathcal{I}^{*}$-sequential topological space ]{ Some properties of $\mathcal{I}^{*}$-sequential topological space }
	
	\author{H. Sabor Behmanush}
	\address{Mersin University, Institute of Sciences Dept. of Mathematics, Mersin, TURKEY}
	\email{}

	\email{h.s.behmanush1989@gmail.com }
	\author{M. K\"u\c c\"ukaslan}
	\address{Mersin University, Faculty of Science Dept. of Mathematics, Mersin, TURKEY}
	\email{}
	\email{mkkaslan@gmail.com and mkucukaslan@mersin.edu.tr}
	\date{}
	\subjclass[2010]{54A20; 54B15; 54C08; 54D55; 26A03; 40A05}

\begin{abstract}
		In this paper, we will define $\mathcal{I}^{*}$-sequential topology on a topological space $(X,\tau)$ where $\mathcal{I}$ is an ideal of the  subset of natural numbers $\mathbb{N}$. Besides the basic properties of the $\mathcal{I}^{*}$-sequential topology,  we proved that   $\mathcal{I}^{*}$-sequential topology is finer than $\mathcal{I}$-sequential topology.
		Further, we will discus main properties of  $\mathcal{I}^{*}$ sequential continuity and  $\mathcal{I}^{*}$ sequential compactness.
\end{abstract}

\maketitle
  Keywords: $\mathcal{I}$- convergence, $\mathcal{I}^{*}$-convergence, Statistical convergence, Sequentially $\mathcal{I}$-topology, Sequentially $\mathcal{I}^{*}$-topology  

\section{Introduction}
The main topic of  this paper is to introduce  $\mathcal{I}^{*}$-sequential topological space as the $\mathcal{I}$-sequential topological space introduced by X. Zhou, L. Liu, S. Lin, in \cite{18}. 

Let us start with a few word about the history of this concept. The concept of statistical convergence was defined by Steinhouse \cite{st} and Fast \cite{4}, independently in 1951. Many studies on statistical convergence have been conducted over the years.
It has many application in different field of mathematics like, summability theory, number theory, trigonometric series, probability theory, measure theory, optimization and approximation and etc.

After that in 2000, authors  P. Kostyrko, T. Salat and W. Wilczynski  introduced the notion of $\mathcal{I}$-convergence in \cite{9} which, in its particular case, coincides with statistical and classical convergence.  Because of the flexibility of the ideal concept, several results relating to $\mathcal{I}$-convergence given in different spaces  \cite{14,15,8,5,11,17,7,13,6}.

Between the years 2012-2019, especially the authors  B.K. Lahiri and P. Das, S.K. Pal, A.K. Banerjee and A. Banerjee, X. Zhou and L. Liv, S. Lin in the papers \cite{10,12,1,18} extend the ideal of $\mathcal{I}$-convergence of a sequence to any topological space and they introduced several properties of this concept in a topological space, recently researches on  $\mathcal{I}$-convergent,  generalized in $\mathcal{I}^{*}$-convergent.

In this paper, we continue the study of topological space which is defined by $\mathcal{I}^{*}$-convergence and we derive some basic properties. Apart from that we are going to compare the  $\mathcal{I}$-sequential topology with $\mathcal{I}^{*}$-sequential topology. 

Recall the notion of  statistical convergence in a topological space. For any subset $A$ in $\mathbb{N}$ the asymptotic density of $A$  is given by $$\delta (A):= \lim _{n\to \infty }\frac{1}{n}\left |\{k \in \mathbb{N}: k\leq n\}  \right |$$
if the limit exists. 

Let $(X, \tau)$ be a topological space, a sequence $\tilde{x}=(x_{n}) \subseteq X$ is said to convergent statistically to a point $x\in X$ if $$\delta( \{ n\in \mathbb{N}, x_{n}\notin U\})=0, $$ holds for any neighborhood $U$ of $x$.

Equivalently,  if $$\delta( \{ n\in \mathbb{N}, x_{n}\in U\})=1,$$
holds. It is denoted by$$ st-\lim _{x \to \infty}x_{n}=x \ or \ x_{n}\overset{st}{\rightarrow} x$$

For a sequence $\tilde{x}=(x_{n})$, let a subset $U$ of $X$, denote by $$A_{U}(\tilde{x}):= \{n\in \mathbb{N} : x_{n} \notin U\}$$ which is also denoted by $A_{U}$. 

It is easy to see in \cite{18} that a sequence in a topological space $X$ converges statistically to a point $x\in X$ if and only if for any neighborhood $U$ of $x$, we have $$\delta (A_{U})=0,$$ (equivalently $\delta ({A}^{c}_{U})=1$).

Now, we recall the concept of sequential topological space, let $(X,\tau)$ be a topological space, a subset $F\subseteq X$ is called sequentially closed if for each sequence $\tilde{x}=(x_{n}) \subset F$ with $x_{n} \to x\in X$ then $x\in F$.

The space $(X,\tau)$ is called sequential topolgical space if each sequentially closed subset of $X$ is closed. 

A subset $U \subseteq X$ is said to be sequentially open if $X-U$ is sequentially closed.

A sequence $\tilde{x}=(x_{n})\subset X$ is said to be eventually in an open subset $U$ of $X$, if there exists $n_{0} \in \mathbb{N}$ such that  $x_{n}\in U$ holds, for all $n>n_{0}$.

Obviously, a subset $U \subseteq X$ is sequentially open if and only if for each sequence $\tilde{x}=(x_{n})$ converging to a point $x \in U$, then $\tilde{x}=(x_{n})$ is eventually in $U$.

\begin{definition}
	Let $S$ be a set then a family $\mathcal{I}\subseteq P(S)$ is called an ideal on $S$ if\\
	(i) For all  $A,B\in \mathcal{I}$ implies $A \cup B \in \mathcal{I}$ \\
	(ii) If $A\in \mathcal{I}$ and $B\subseteq A$ then $B\in \mathcal{I}$.
	
\end{definition}
An ideal on $S$ is called admissible if $\{s\} \in\mathcal{I}$ holds for all $s \in S $; an ideal on $S$ is called proper ideal if $S \notin \mathcal{I}$. 

A proper ideal is called maximal ideal if it is maximal element of the set of all proper ideals on $S$ ordered by inclusion. 

An ideal $\mathcal{I}$ is called non trivial if $\mathcal{I} \neq \phi $ and $S \notin \mathcal{I}$.
\begin{example}
	The families $\mathcal{I}_{Fin}=\{A\subset \mathbb{N}: A \text{ is finite set} \}$  and $\mathcal{I}_{\delta}=\{A\subset \mathbb{N}: \delta(A)=0 \} $ are ideal on the set of natural numbers.
\end{example} 
\begin{remark}
	If we consider;\\ 	
	(i) $\mathcal{I}_{\delta}:=\left\lbrace A\subset \mathbb{N}: \delta (A)=0\right\rbrace,$ then statistical and ideal convergence are coincide.\\ 	
	(ii) $\mathcal{I}_{Fin}:=\left\lbrace A\subset \mathbb{N}: A \text{ is finite set}\right\rbrace,$ then classical convergence and ideal convergence are coincide.
\end{remark}

\begin{example} 
	\label{Exp 1} \cite{8}
	Let $\mathbb{N}=\bigcup_{i=1}^{\infty }\Delta_{i}$ be a decomposition of $\mathbb{N}$ such that for all $i \in \mathbb{N}$ the set $\Delta_{i}$ are infinite subsets of $\mathbb{N}$  and $\Delta_{i}\cap \Delta{j}=\phi$ for all $i\neq j$.
	
	Let $$\mathcal{I}=\{B\subset \mathbb{N}: B \text{ intersect  at most finite number of } \Delta_{j}'s\}. $$
	Then, $\mathcal{I}$ is an admissible and  nontrivial ideal.
\end{example}

The dual notion to the notion of ideal is called filter and defined as follow:
\begin{definition}\cite{14}
	A family $\mathscr{F}\subseteq P(S)$ is said to be filter if \\
	(i)  $A \cap B \in \mathscr{F}$ for all $A, B \in \mathscr{F}$,\\
	(ii)  If $A \in \mathscr{F} \wedge A \subseteq B$, then $B \in \mathscr{F}$.
\end{definition}
A filter $\mathscr{F}$ is called proper if $\phi \notin \mathscr{F}$.
For every non-trivial ideal $\mathcal{I}$ defines a dual filter $$\mathscr{F}(\mathcal{I}):= \{A\subseteq S: S-A \in \mathcal{I}\} $$ on the set $S$ and  we say that $\mathscr{F}(\mathcal{I})$ is the filter associated by $\mathcal{I}$.

In this paper, we are consider $S=\mathbb{N}$ the set of natural numbers and  $(X,\tau)$ denotes a topological space and $\mathcal{I}$ is an admissible ideal defined on the subset of $\mathbb{N}$. Unless otherwise stated, the triple $X$, $\tau$ and $I$ will be displayed in  $(X,\tau, \mathcal{I})$ format.
\begin{definition}\cite{18}
	A sequence $\tilde{x}=(x_{n})$ in a topological space $(X,\tau ,\mathcal{I})$ is said to be $I$-convergent to a point $x \in X,$ if  $$\{n \in \mathbb{N} : x_{n} \notin U\} \in \mathcal{I},$$ holds for any neighborhood $U$ of $x$ and it is denoted by $\mathcal{I}-lim x_{n} =x$ (or $(x_{n})\overset{\mathcal{I}}{\rightarrow}x)$.
\end{definition}

\begin{remark}\label{remark 2}
	If  $\mathcal{I}$ is an admissible ideal, then classical convergence implies $\mathcal{I}$-convergent to $x$.
\end{remark}
Unfortunately, the converse of the Remark \ref{remark 2}  is not true. To see this let $x$ and $y$ be two diffierent elements of $X$ and let $A\in \mathcal{I}$ and consider a sequence $x_n=x$ when $n\in A$ and $x_n=y$ when $n\notin A$. 

Clear that the sequence $(x_n)$ is $\mathcal{I}$ convergent to $y$ but not convergent to $y$ in usual case.
\begin{definition}\cite{ab}
	Let $\mathcal{I}$ be an ideal of $\mathbb{N}$ and $X$ be a topological space, 
	
	(i) For each subset $A\subseteq X$, we define $\bar{A}^{\mathcal{I}}$, the $\mathcal{I}$-closure of $A$ by $$\bar{A}^{\mathcal{I}}=\{x\in X: \exists \ (x_{n})\subset A: x_{n} \to x \}.$$
	
	(ii) A subset $F\subseteq X$ is said to be $\mathcal{I}$-closed if $\bar{F}^{\mathcal{I}}=F$.
	
	(iii) A subset $A\subseteq X$ is said to be $I$-open if $X-A$ is $\mathcal{I}$-closed.
\end{definition}
It is clear that $\bar{\phi}^{\mathcal{I}}=\phi$ and $A\subseteq \bar{A}^{\mathcal{I}}$ hold.  Also,
it can be easily seen that any open subset of topological space $(X,\tau,\mathcal{I})$ is also $\mathcal{I}$-open.

\section{$\mathcal{I}^{*}$- sequential topological space}
Now, we are going to observe the notion of  $\mathcal{I}^{*}$- sequential topological space which is mostly similar to the $\mathcal{I}$- sequential topological space but this topology is finer then $\mathcal{I}$- sequential topological space.
\begin{definition}
Let $(X,\tau,\mathcal{I})$ be a topological space, a sequence $\tilde{x}=(x_{n})$ in $X$ is said to be $\mathcal{I}^{*}$-convergent to a point $x\in X$ if there exists $M\in \mathscr{F}(\mathcal{I})$,$$M=\{m_{1}<m_{2}<\cdots<m_{k}<\cdots\}$$ such that for any neighborhood $U$ of $x$, there exists $ N\in \mathbb{N}$ such that  $x_\textrm{k}\in U$ holds for all  $m_\textrm{k}>N$.
\end{definition} 

If in topological space $(X, \tau,\mathcal{I} )$ the set $X$ is also a field then the definition of  $\mathcal{I}^{*}$-convergence can be reformatted in the form of decomposition theorem as follows: 
\begin{theorem}
Let $(X,\tau, \mathcal{I})$ be a topological space. 
  A sequence $\tilde{x}=(x_{n})\subset X$ is $\mathcal{I}^{*}$-convergent to a point $x\in X$ if and only if it can be written as $x_{n}=t_{n}+s_{n}$ for all $n\in \mathbb{N}$ such that $\tilde{t}=(t_{n})\subset X$ is a $\mathcal{I}_{Fin}$-convergent to $x$ and $\tilde{s}=(s_{n})\subset X$ is non zero only for a set in ideal $\mathcal{I}$.
\end{theorem}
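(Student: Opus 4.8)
The plan is to prove both implications by exhibiting an explicit additive decomposition governed by the filter set that witnesses $\mathcal{I}^{*}$-convergence. The key observation is that $\mathcal{I}^{*}$-convergence of $(x_{n})$ to $x$ means precisely that there is a set $M\in\mathscr{F}(\mathcal{I})$ along which $(x_{n})$ converges to $x$ in the ordinary sense, and that the complement $\mathbb{N}\setminus M$ lies in $\mathcal{I}$. Since $\mathcal{I}_{Fin}$-convergence coincides with classical convergence (Remark above), ``$\tilde{t}$ is $\mathcal{I}_{Fin}$-convergent to $x$'' is just ordinary convergence. The field structure of $X$ is used only to form sums and differences; no continuity of the operations is required.

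For the forward implication, suppose $(x_{n})$ is $\mathcal{I}^{*}$-convergent to $x$, and let $M=\{m_{1}<m_{2}<\cdots\}\in\mathscr{F}(\mathcal{I})$ be the witnessing set, so that $\mathbb{N}\setminus M\in\mathcal{I}$ and $x_{n}\to x$ for $n\in M$. I would define
\[ t_{n}=\begin{cases} x_{n}, & n\in M,\\ x, & n\notin M,\end{cases}\qquad s_{n}=x_{n}-t_{n}=\begin{cases} 0, & n\in M,\\ x_{n}-x, & n\notin M.\end{cases} \]
Then $x_{n}=t_{n}+s_{n}$ holds for every $n$ by construction. To see that $(t_{n})$ is $\mathcal{I}_{Fin}$-convergent to $x$, fix a neighborhood $U$ of $x$; convergence along $M$ gives an $N$ with $x_{n}\in U$ for all $n\in M$ with $n>N$, while $t_{n}=x\in U$ automatically for $n\notin M$, so $t_{n}\in U$ for all $n>N$. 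Finally $\{n:s_{n}\neq 0\}\subseteq\mathbb{N}\setminus M\in\mathcal{I}$, whence $(s_{n})$ is nonzero only on a set in $\mathcal{I}$.

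For the converse, suppose $x_{n}=t_{n}+s_{n}$ with $(t_{n})$ classically convergent to $x$ and $A:=\{n:s_{n}\neq 0\}\in\mathcal{I}$. I would take $M=\mathbb{N}\setminus A$, which lies in $\mathscr{F}(\mathcal{I})$ by the very definition of the associated filter. For $n\in M$ we have $s_{n}=0$, hence $x_{n}=t_{n}$; since $(t_{n})\to x$ classically, every neighborhood $U$ of $x$ contains $t_{n}=x_{n}$ for all large $n$, in particular for all large $n\in M$. This is exactly the defining condition for $(x_{n})$ to be $\mathcal{I}^{*}$-convergent to $x$ with witness $M$.

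The argument is largely bookkeeping, so the only point requiring genuine care is the forward direction's verification that $(t_{n})$ converges classically: one must merge the two regimes $n\in M$ and $n\notin M$ into a single tail condition, using that $x\in U$ for any neighborhood $U$ of $x$. I expect this to be the main (and quite mild) obstacle; everything else is immediate from the definitions of $\mathscr{F}(\mathcal{I})$ and of $\mathcal{I}_{Fin}$-convergence.
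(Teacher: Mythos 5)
Your proof is correct and follows essentially the same route as the paper: the identical decomposition $t_{n}=x_{n}$ for $n\in M$, $t_{n}=x$ for $n\notin M$, with $s_{n}=x_{n}-t_{n}$, in the forward direction. Your converse is in fact slightly cleaner than the paper's, since you take $M=\mathbb{N}\setminus\{n: s_{n}\neq 0\}$, which genuinely lies in $\mathscr{F}(\mathcal{I})$ and on which $s_{n}=0$ by definition, whereas the paper chooses $M=\mathbb{N}\setminus\{n: t_{n}\notin U\}$, a set that depends on the neighborhood $U$ and on which the asserted vanishing of $s_{n}$ does not actually follow.
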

\begin{proof}
   Let  $x_{n}=t_{n}+s_{n}$, for all $n\in \mathbb{N}$ where $t_{n}{\rightarrow} x (\mathcal{I}_{Fin})$ and $(s_{n})$ is non zero only on set from ideal $\mathcal{I}$. Since $t_{n}{\rightarrow} x  (\mathcal{I}_{Fin})$, then for any neighborhood $U$ of $x$  $$\{n\in \mathbb{N}:\ t_{n}\notin U \}\notin F_{in}$$ holds.  Let $$M=\mathbb{N}-\{n\in \mathbb{N}:\ t_{n}\notin U \}.$$
   
   Then,  $s_{n}=0$ for all $n\in M$. So, $x_{n}=t_{n}$ and  this implies that for any neighborhood $U$ of $x$ $x_{n}\in U$ holds for all $n\in M$. Hence, $x_{n}\overset{I^{*}}{\rightarrow} x$.
   
   Conversely, let $x_{n}\overset{I^{*}}{\rightarrow}x$. Then, there exists $M\in F(\mathcal{I})$ and $N\in \mathbb{N}$ such that $x_{n}\in U$ holds for all $n>N$ and $n\in M$  for any neighborhood $U$ of $x$.
   Define the sequence $\tilde{t}=(t_n)$ as $$t_{n}=\left\{\begin{matrix}
x_{n}, & n \in M,\\ 
x, &  n\notin M, 
\end{matrix}\right.$$\\ and the sequence $\tilde{s}=(s_{n})$ as 
$$s_{n}=\left\{\begin{matrix}
0, & n \in M,\\ 
x_{n}-x, &  n\notin M. 
\end{matrix}\right.$$

Then, $t_{n}{\rightarrow}x (\mathcal{I}_{Fin})$ and $(s_{n})$ is nonzero only on a set from ideal $\mathcal{I}$ and
 $x_{n}=t_{n}+s_{n}$ holds  for all $n\in \mathbb{N}$.
\end{proof}

 The fact every $\mathcal{I}^{*}$-convergent sequence is $\mathcal{I}$-convergence sequence was stated by B. K. Lahiri and P. Das in \cite{10}. Because this concept plays an important role in seeing the relation between $\mathcal{I}$-sequential topology and $\mathcal{I}^{*}$-sequential topology so we repeated it again:

\begin{theorem} \cite{10} \label{theorem 1} 
Let $(X,\tau, \mathcal{I})$ be a topological space. If $x_{n}\overset{\mathcal{I}^{*}}{\rightarrow}x$, then  $x_{n}\overset{\mathcal{I}}{\rightarrow}x$.
\end{theorem}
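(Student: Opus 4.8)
The plan is to unwind both definitions and show that, for each fixed neighborhood, the set of indices that witness $\mathcal{I}$-nonconvergence is trapped inside a member of the ideal $\mathcal{I}$. First I would invoke the hypothesis $x_{n}\overset{\mathcal{I}^{*}}{\rightarrow}x$ to fix a single set $M\in\mathscr{F}(\mathcal{I})$ --- crucially chosen once and for all, independently of the neighborhood --- with the property that along the indices of $M$ the sequence eventually enters every neighborhood of $x$. Recalling that $M\in\mathscr{F}(\mathcal{I})$ means precisely $\mathbb{N}-M\in\mathcal{I}$, this fixed set is the engine of the whole argument.

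Next I would fix an arbitrary neighborhood $U$ of $x$ and aim to verify $A_{U}=\{n\in\mathbb{N}:x_{n}\notin U\}\in\mathcal{I}$. By the $\mathcal{I}^{*}$-hypothesis applied to $U$, there is a threshold $N\in\mathbb{N}$ (allowed to depend on $U$) such that $x_{n}\in U$ for every $n\in M$ with $n>N$. The key containment is then
\[
A_{U}\subseteq(\mathbb{N}-M)\cup\{n\in\mathbb{N}:n\leq N\},
\]
because any index $n$ that lies in $M$ and exceeds $N$ satisfies $x_{n}\in U$ and hence cannot belong to $A_{U}$; taking the contrapositive gives exactly the displayed inclusion.

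Finally I would close the argument using the ideal axioms from the first definition. The set $\mathbb{N}-M$ lies in $\mathcal{I}$ by the definition of the associated filter $\mathscr{F}(\mathcal{I})$, and the finite set $\{n\leq N\}$ lies in $\mathcal{I}$ because $\mathcal{I}$ is admissible, so it contains every singleton and therefore every finite set by closure under finite unions. A second application of closure under finite unions shows that the right-hand side of the inclusion belongs to $\mathcal{I}$, and the hereditary (downward-closure) property then forces $A_{U}\in\mathcal{I}$. Since $U$ was arbitrary, this is precisely the assertion $x_{n}\overset{\mathcal{I}}{\rightarrow}x$. I expect the only genuine care-point to be the bookkeeping inside the definition of $\mathcal{I}^{*}$-convergence: one must observe that $M$ is selected before $U$ while the threshold $N$ may vary with $U$, so that the finite initial-segment error $\{n\leq N\}$ is absorbed harmlessly into the ideal and does not obstruct membership in $\mathcal{I}$.
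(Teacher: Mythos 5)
Your proposal is correct and follows essentially the same route as the paper's proof: fix the single set $M\in\mathscr{F}(\mathcal{I})$ from the $\mathcal{I}^{*}$-hypothesis, then for each neighborhood $U$ trap $\{n:x_{n}\notin U\}$ inside the union of $\mathbb{N}-M\in\mathcal{I}$ and a finite initial segment, and conclude by admissibility and the ideal axioms. The only cosmetic difference is that the paper writes the finite exceptional set as $\{m_{1},\dots,m_{N}\}$ rather than $\{n\leq N\}$, which changes nothing.
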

\begin{proof}
Let $x_{n}\overset{\mathcal{I}^{*}}{\rightarrow}x$. Hence, there exists $M\in \mathscr{F}(\mathcal{I})$ such that $M=\mathbb{N} - K$ where $ K \in \mathcal{I}$  $$ M=\lbrace m_{1}<m_{2}<\dots,<m_{k}<\dots\rbrace$$ such that for any neighborhood $U$ of $x$, there exists $N\in \mathbb{N}$ such that  $x_{mk}\in U$ holds for all $m_{k}>N$. This implys that  $$\lbrace n:x_{n}\notin u\rbrace\subset K \cup \lbrace m_{1},m_{2},\dots,m_{N}\rbrace\in \mathcal{I}. $$ 

Hence, $x_{n}\overset{\mathcal{I}}{\rightarrow}x$.
\end{proof}
 Following example will show that the converse of Theorem \ref{theorem 1} is not true:  
 \begin{example}
 Let  $(\mathbb{R}, \tau_{e})$ be Euclidean topology and zero is a limit point.  Let $n \in \mathbb{N}$ and consider  $$B_{n}(0):=(-\frac{1}{2n},\frac{1}{2n}) $$ be a monotonically decreasing open base at zero. 
 
 Define a real valued sequence $\tilde{x}=(x_{n})$  such that $$x_{n}\in B_{n}(0)-B_{n+1}(0)$$ as follows $$(x_{n})=(\frac{2n+1}{4n^{2}+4n}).$$ 
 
 It is clear that $x_{n}\to 0$ when $n\to \infty$. Consider the ideal which is given in Example \ref{Exp 1} and let us note that any $\Delta_{i}$ is a member of $I$ for all $i\in \mathbb{N}$.
 
 Let $\tilde{y}=(y_{n})$ be a sequence defined by $y_{n}=x_{j}$ if $n\in \Delta{j}$. Let $U$ be any open set containing zero. Choose a positive integer $m$ such that $$B_{n}(0)\subset U,$$ for all $n>m$, then $$\{n: y_{n}\notin U\}\subset \Delta_{1}\cup \Delta_{2} \cup \Delta_{3}...\cup \Delta_{m} \in \mathcal{I}$$ this implies that $y_{n}\overset{\mathcal{I}}{\rightarrow}0$.
 
 Now suppose that $y_{n}\overset{\mathcal{I}^{*}}{\rightarrow}0$, then there exists $H\in \mathcal{I}$ such that for $$M=\mathbb{N}-H=\{m_{1}<m_{2}<...<m_{k}...\}\in \mathscr{F}(\mathcal{I})$$ we have: there exists $N\in \mathbb{N}$ such that $x_{m_{k}}\in U$  for all $m_{k}>N$ and for any neighborhood $U$ of zero. Let $l\in \mathbb{N}$ and assume that $$H\subset \Delta_{1}\cup \Delta_{2} \cup \Delta_{3}...\cup \Delta_{l}$$ then  $\Delta_{i}\subset \mathbb{N}-H$ holds for all for $i>l+1$.  
 
 Therefore, for each $i>l+1$, there is infinitely many $k$'s such that $y_{m_{k}}=x_{i}$. But, then the $lim {y_{n_{k}}}$ doesn't exists because $x_{i}\neq x_{j}$ for all $i\neq j$. 
 \end{example}
    \begin{theorem} \label{theorem 2}
    Let $\mathcal{I}$ and $\mathcal{J}$ be two ideals of $\mathbb{N}$ such that $\mathcal{I}\subseteq \mathcal{J}$ and $\tilde{x}=(x_{n})$ be a sequence in a topological space $(X,\tau)$. If $x_{n}\overset{\mathcal{I}^{*}}{\rightarrow}x$, then $x_{n}\overset{\mathcal{J}^{*}}{\rightarrow}x$.
    \end{theorem}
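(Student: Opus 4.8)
The plan is to reduce the statement to a single observation about associated filters: enlarging the ideal enlarges its dual filter, so that any set witnessing $\mathcal{I}^{*}$-convergence automatically witnesses $\mathcal{J}^{*}$-convergence. Concretely, the first step I would carry out is to prove the inclusion $\mathscr{F}(\mathcal{I})\subseteq\mathscr{F}(\mathcal{J})$ whenever $\mathcal{I}\subseteq\mathcal{J}$. This is immediate from the definition of the associated filter: if $M\in\mathscr{F}(\mathcal{I})$, then $\mathbb{N}-M\in\mathcal{I}$, and since $\mathcal{I}\subseteq\mathcal{J}$ we get $\mathbb{N}-M\in\mathcal{J}$, that is, $M\in\mathscr{F}(\mathcal{J})$.

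With this in hand, I would run the main argument directly from the definition. Assuming $x_{n}\overset{\mathcal{I}^{*}}{\rightarrow}x$, there exists a set $M=\{m_{1}<m_{2}<\cdots\}\in\mathscr{F}(\mathcal{I})$ such that for every neighborhood $U$ of $x$ there is an $N\in\mathbb{N}$ with $x_{m_{k}}\in U$ for all $m_{k}>N$. By the filter inclusion just established, this same $M$ also lies in $\mathscr{F}(\mathcal{J})$. The crucial point is that the ``eventual membership along $M$'' condition in the definition of $\mathcal{I}^{*}$-convergence depends only on the set $M$ and the sequence, and not on which ideal $M$ was drawn from; hence the identical set $M$ satisfies the requirements for $\mathcal{J}^{*}$-convergence verbatim, giving $x_{n}\overset{\mathcal{J}^{*}}{\rightarrow}x$.

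I do not anticipate a genuine obstacle here. The entire content is the monotonicity $\mathcal{I}\subseteq\mathcal{J}\Rightarrow\mathscr{F}(\mathcal{I})\subseteq\mathscr{F}(\mathcal{J})$, which is a one-line consequence of the definition of the dual filter. The only point deserving a moment of care is to verify that the convergence clause is phrased purely in terms of the witnessing set $M$, so that reusing $M$ for the larger ideal requires no modification of $N$ or of the neighborhoods involved; once this is noted, the conclusion follows without further work.
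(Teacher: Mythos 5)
Your proposal is correct and follows exactly the paper's own argument: the paper likewise takes the witnessing set $M\in\mathscr{F}(\mathcal{I})$, observes that $\mathbb{N}-M\in\mathcal{I}\subseteq\mathcal{J}$ so $M\in\mathscr{F}(\mathcal{J})$, and reuses $M$ verbatim for $\mathcal{J}^{*}$-convergence. No differences worth noting.
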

    \begin{proof}
     Let $(x_{n})\overset{\mathcal{I}^{*}}{\rightarrow}x$. Then, there exists $M\in \mathscr{F}(\mathcal{I})$ as $$M=\lbrace m_{1}<m_{2}<\dots,<m_{k}<\dots\rbrace$$ such that for any neighborhood $U$ of $x$, there exists $N\in \Bbb{N}$ such that $x_{m_{k}}\in U$,  for all $m_{k}>N$ holds. 
     
     Hence, $M\in \mathscr{F}(\mathcal{I})$ implies that  $\Bbb{N}-M\in \mathcal{I} $. From the assumption of Theorem we have $\Bbb{N}-M\in \mathcal{J} $ so $  M\in \mathscr{F}(\mathcal{J})$ hence $(x_{n})\overset{J^{*}}{\rightarrow}x$.
     \end{proof}
    \begin{remark}
    	Converse of Theorem \ref{theorem 2} is not true, in general.
    \end{remark}
 \begin{definition}
Let  $(X,\tau,\mathcal{I})$ be a topological space.

(i) For each subset $A$ of $X$ we define $\overline{A}^{\mathcal{I}^{*}}$  ($\mathcal{I}^{*}$-Closure of $A$) by
    $$\overline{A}^{\mathcal{I}^{*}} = \lbrace x\in X: \exists (x_{n}) 
      \subset A \ such\ that\ x_{n}\overset{I^{*}}{\rightarrow}x  \rbrace$$
      
(ii) A subset $F \subseteq X$ is said to be $\mathcal{I}^{*}$-closed if $ \overline{F}^{\mathcal{I}^{*}}= F$.

(iii) A subset $U\subseteq X$ is said to be $\mathcal{I}^{*}$-open if $X-U$ is $\mathcal{I}^{*}$-closed.
\end{definition} 

\begin{remark}
It is clear that $\overline{\phi}^{\mathcal{I}^{*}}$= $\phi$ and $A\subset \overline{A}^{\mathcal{I}^{*}}$ are true for all $A \subseteq X$.
\end{remark}

 \begin{theorem}
  Let $(X,\tau)$ be a topological space, and $\mathcal{I}$ be finite ideal. Then, $\mathcal{I}$-convergent of a sequence and $\mathcal{I}^{*}$-convergence of a sequence are coincide.
\end{theorem}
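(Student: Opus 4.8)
The plan is to prove the equivalence by establishing two implications, under the reading that a \emph{finite ideal} means $\mathcal{I}=\mathcal{I}_{Fin}$, the ideal of all finite subsets of $\mathbb{N}$. One direction is already available for free: Theorem \ref{theorem 1} guarantees that $x_{n}\overset{\mathcal{I}^{*}}{\rightarrow}x$ implies $x_{n}\overset{\mathcal{I}}{\rightarrow}x$ for any admissible ideal, and $\mathcal{I}_{Fin}$ is admissible. Hence the whole content of the statement reduces to the converse implication, namely that $x_{n}\overset{\mathcal{I}_{Fin}}{\rightarrow}x$ forces $x_{n}\overset{\mathcal{I}_{Fin}^{*}}{\rightarrow}x$.

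For this converse I would start from the definition of $\mathcal{I}_{Fin}$-convergence: for every neighborhood $U$ of $x$ the exceptional set $A_{U}=\{n\in\mathbb{N}:x_{n}\notin U\}$ belongs to $\mathcal{I}_{Fin}$, i.e.\ it is finite. The decisive observation is that the witnessing set $M$ demanded in the definition of $\mathcal{I}^{*}$-convergence may be taken to be all of $\mathbb{N}$, since $\mathbb{N}\in\mathscr{F}(\mathcal{I}_{Fin})$ (its complement $\phi$ is finite, hence lies in $\mathcal{I}_{Fin}$). Writing $M=\mathbb{N}=\{m_{1}<m_{2}<\cdots\}$, so that $m_{k}=k$, I would then fix an arbitrary neighborhood $U$ of $x$ and set $N=\max A_{U}$ (with the convention $N=0$ when $A_{U}=\phi$); because $A_{U}$ is finite this maximum exists, and $x_{m_{k}}=x_{k}\in U$ holds for every $m_{k}>N$. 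This is exactly the defining condition for $x_{n}\overset{\mathcal{I}_{Fin}^{*}}{\rightarrow}x$, completing the converse.

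Combining the two directions yields the claimed coincidence. There is no substantial obstacle in this argument; the only point that requires care is interpreting ``finite ideal'' correctly as $\mathcal{I}_{Fin}$ and recording that $\mathbb{N}$ itself belongs to the associated filter, which is precisely what allows the single uniform choice $M=\mathbb{N}$ to serve for all neighborhoods $U$ simultaneously. This also fits the general picture that over $\mathcal{I}_{Fin}$ both $\mathcal{I}$- and $\mathcal{I}^{*}$-convergence collapse to ordinary sequential convergence, so their agreement is expected.
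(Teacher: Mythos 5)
Your proof is correct and follows essentially the same route as the paper: one direction is delegated to Theorem \ref{theorem 1}, and the converse uses the finiteness of the exceptional set $A_{U}=\{n:x_{n}\notin U\}$ to extract a threshold $N$ beyond which the terms lie in $U$. The only difference is that you fix the witnessing set $M=\mathbb{N}$ once and for all, which correctly respects the quantifier order in the definition of $\mathcal{I}^{*}$-convergence (a single $M$ must serve every neighborhood $U$), whereas the paper takes $M=\mathbb{N}-A_{U}$, a choice that depends on $U$; your version is the slightly more careful rendering of the same idea.
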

\begin{proof}
We proved that if $x_{n}\overset{\mathcal{I}^{*}}{\rightarrow}x$ then  $x_{n}\overset{\mathcal{I}}{\rightarrow}x$, for any ideal of $\mathbb{N}$.

Let a sequence $x_{n}\overset {\mathcal{I} }\rightarrow x$, then for any neighborhood $U$ of $x$ , $$A:=\lbrace n \in \Bbb{N}: x_{n} \notin U \rbrace  \in \mathcal{I}.$$ 

Consider $M= \Bbb{N}-A \in \mathscr{F}(\mathcal{I})$ and arrange $M$ as $$M=\lbrace m_{1}<m_{2}<\cdots<m_{k}<\cdots\rbrace.$$ 

Since the set $A$ is finite, then there exists $N \in \Bbb{N}$ such that $x_{m_{k}} \in U$ holds for all $m>N$. Therefore, this implies  $x_{n}\overset{\mathcal{I}^{*}}{\rightarrow}x$.  
\end{proof}

\begin{theorem}
Let $(X,\tau, \mathcal{I})$ be a topological space. If $\mathcal{I}$ is admissible ideal, then every $\mathcal{I}$-open subset of $X$ is $\mathcal{I}^{*}$- open subset of $X$.
\end{theorem}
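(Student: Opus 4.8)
The plan is to pass to complements and reduce the statement to the corresponding fact about closed sets: by definition a set $U\subseteq X$ is $\mathcal{I}$-open exactly when $F:=X-U$ satisfies $\overline{F}^{\mathcal{I}}=F$, and $U$ is $\mathcal{I}^{*}$-open exactly when $\overline{F}^{\mathcal{I}^{*}}=F$. So it suffices to show that every $\mathcal{I}$-closed set is $\mathcal{I}^{*}$-closed. Since the remark following the definition of $\mathcal{I}^{*}$-closure already gives $F\subseteq\overline{F}^{\mathcal{I}^{*}}$, the only thing I need to establish is the reverse inclusion $\overline{F}^{\mathcal{I}^{*}}\subseteq F$.

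The heart of the argument is the general containment
$$\overline{A}^{\mathcal{I}^{*}}\subseteq\overline{A}^{\mathcal{I}}\qquad\text{for every }A\subseteq X.$$
To prove it I would take an arbitrary $x\in\overline{A}^{\mathcal{I}^{*}}$; by definition there is a sequence $(x_{n})\subset A$ with $x_{n}\overset{\mathcal{I}^{*}}{\rightarrow}x$. Applying Theorem \ref{theorem 1} yields $x_{n}\overset{\mathcal{I}}{\rightarrow}x$, so that $x\in\overline{A}^{\mathcal{I}}$. This is precisely where admissibility enters: it guarantees that the finite block of exceptional indices occurring in the proof of Theorem \ref{theorem 1} belongs to $\mathcal{I}$, which is exactly what makes $\mathcal{I}^{*}$-convergence imply $\mathcal{I}$-convergence.

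To finish, I would specialize the containment to $A=F$ and use that $F$ is $\mathcal{I}$-closed, obtaining $\overline{F}^{\mathcal{I}^{*}}\subseteq\overline{F}^{\mathcal{I}}=F$. Combined with $F\subseteq\overline{F}^{\mathcal{I}^{*}}$ this forces $\overline{F}^{\mathcal{I}^{*}}=F$, i.e.\ $F=X-U$ is $\mathcal{I}^{*}$-closed and hence $U$ is $\mathcal{I}^{*}$-open. I do not expect a genuine obstacle: the entire content is concentrated in the inclusion $\overline{A}^{\mathcal{I}^{*}}\subseteq\overline{A}^{\mathcal{I}}$, and once the convergence implication of Theorem \ref{theorem 1} is granted the remaining steps are purely formal closure bookkeeping. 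The one point to state carefully is that $\overline{A}^{\mathcal{I}}$ is understood as the set of $\mathcal{I}$-limits of sequences drawn from $A$, since it is this interpretation that makes the closure inclusion run in the direction needed for the proof.
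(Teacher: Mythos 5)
Your proposal is correct and follows essentially the same route as the paper's own proof: pass to the complement, take an arbitrary point of $\overline{X-U}^{\mathcal{I}^{*}}$, and use the implication $x_{n}\overset{\mathcal{I}^{*}}{\rightarrow}x\Rightarrow x_{n}\overset{\mathcal{I}}{\rightarrow}x$ to land in $\overline{X-U}^{\mathcal{I}}=X-U$. Your closing caveat about reading $\overline{A}^{\mathcal{I}}$ as the set of $\mathcal{I}$-limits of sequences from $A$ is well taken, since the paper's displayed definition of $\mathcal{I}$-closure literally writes ordinary convergence, but the paper's proof uses the same interpretation you do.
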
 
 \begin{proof} Let $U$ be an $\mathcal{I}$-open subset of $X$.  Then, $X-U$ is $\mathcal{I}$-closed, that is $X-U=\overline{X-U}^\mathcal{I}$ holds. To prove  $$X-U=\overline{X-U}^{\mathcal{I}^{*}}$$ it is sufficient to show that 
 $$\overline{X-U}^{\mathcal{I}^{*}} \subset X-U$$ holds.
 
 Let $x \in\overline{X-U}^{\mathcal{I}^{*}}$ be an arbitrary point. Then, there exists a sequence $\tilde{x}=(x_{n})\subset X-U$ such that $x_{n}\overset{\mathcal{I}^{*}}{\rightarrow}x$ holds, then by Theorem 1 we have  $x_{n}\overset{\mathcal{I}}{\rightarrow}x$. This implies that $$x \in \overline{X-U}^{\mathcal{I}}=X-U. $$ 
 
 Hence, the proof of Theorem is completed.
 \end{proof}

 \begin{corollary}
Let $(X,\tau, \mathcal{I})$ be a topological space. If $\mathcal{I}$ is a finite ideal then $A\subset X$ is $\mathcal{I}$-open if and only if $A$ is $\mathcal{I}^{*}$-open subset of $X$.
\end{corollary}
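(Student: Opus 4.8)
The plan is to deduce the equivalence directly from the coincidence of the two convergence notions established in the preceding theorem, rather than arguing each implication in isolation. Since $\mathcal{I}$ is a finite ideal, every member of $\mathcal{I}$ is finite, and the earlier theorem guarantees that for any sequence $(x_{n})$ and any point $x\in X$ one has $x_{n}\overset{\mathcal{I}}{\rightarrow}x$ if and only if $x_{n}\overset{\mathcal{I}^{*}}{\rightarrow}x$.

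The key step I would carry out is to transfer this coincidence of convergences into a coincidence of the two closure operators. Fixing an arbitrary subset $B\subseteq X$, by definition $\overline{B}^{\mathcal{I}}$ is the set of all $\mathcal{I}$-limits of sequences drawn from $B$, while $\overline{B}^{\mathcal{I}^{*}}$ is the set of all $\mathcal{I}^{*}$-limits of such sequences. Because the two convergence notions coincide, a point is the $\mathcal{I}$-limit of some sequence in $B$ exactly when it is the $\mathcal{I}^{*}$-limit of some sequence in $B$, so that
$$\overline{B}^{\mathcal{I}} = \overline{B}^{\mathcal{I}^{*}}$$
for every $B\subseteq X$.

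Finally I would apply this identity with $B = X-A$. It shows that $X-A$ is $\mathcal{I}$-closed precisely when it is $\mathcal{I}^{*}$-closed, that is, $\overline{X-A}^{\mathcal{I}} = X-A$ holds if and only if $\overline{X-A}^{\mathcal{I}^{*}} = X-A$ holds. By the definitions of $\mathcal{I}$-open and $\mathcal{I}^{*}$-open, this is exactly the assertion that $A$ is $\mathcal{I}$-open if and only if $A$ is $\mathcal{I}^{*}$-open. I note that the forward implication is in any case already supplied by the theorem immediately preceding, since a finite ideal is admissible; the closure argument above simply yields both directions at once.

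The hard part here is essentially nonexistent: the entire content is packaged into the earlier theorem on the coincidence of $\mathcal{I}$- and $\mathcal{I}^{*}$-convergence, and the only thing remaining is the routine observation that identical convergence relations produce identical sequential closures. The one point I would state explicitly, to avoid any gap, is that each closure operator is defined through the respective convergence of sequences taken from the given set, so their agreement is immediate once the underlying convergences agree.
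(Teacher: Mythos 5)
Your argument is correct and follows the same route the paper intends: the corollary is an immediate consequence of the preceding theorem that, for a finite ideal, $\mathcal{I}$-convergence and $\mathcal{I}^{*}$-convergence coincide, which forces the two sequential closure operators (and hence the two families of closed, and of open, sets) to coincide. Your explicit passage from ``same convergence'' to ``same closures'' applied to $B=X-A$ is exactly the missing routine verification, so nothing further is needed.
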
 
 \begin{definition}
  Let $A$ be a subset of topological space $(X,\tau, \mathcal{I})$. We define $A^{o^{\mathcal{I}^{*}}}$(called $\mathcal{I}^{*}$ interior of $A$) as  $$A^{o^{\mathcal{I}^{*}}}:= A-\overline{(X-A)}^{\mathcal{I}^{*}}.$$
\end{definition}
 
\begin{proposition}
 Let $A$ be a subset of topological space $(X,\tau, \mathcal{I})$. Then, the set $A$ is $\mathcal{I}^{*}$-open if and only if $A^{o^{\mathcal{I}^{*}}}=A$.
 \end{proposition}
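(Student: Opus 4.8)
The plan is to reduce both sides of the equivalence to a single statement about the set $\overline{X-A}^{\mathcal{I}^{*}}$, exploiting the universally valid inclusion $B\subset\overline{B}^{\mathcal{I}^{*}}$ recorded in the remark above. Write $B:=X-A$, so that the definition of the $\mathcal{I}^{*}$-interior reads $A^{o^{\mathcal{I}^{*}}}=A-\overline{B}^{\mathcal{I}^{*}}$. The key elementary observation I would use is the set identity $A-S=A$ if and only if $A\cap S=\phi$; thus the condition $A^{o^{\mathcal{I}^{*}}}=A$ is equivalent to the disjointness $A\cap\overline{B}^{\mathcal{I}^{*}}=\phi$.

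For the forward implication, suppose $A$ is $\mathcal{I}^{*}$-open. By definition this means $B=X-A$ is $\mathcal{I}^{*}$-closed, that is $\overline{B}^{\mathcal{I}^{*}}=B$. Substituting into the definition, I would compute $A^{o^{\mathcal{I}^{*}}}=A-\overline{B}^{\mathcal{I}^{*}}=A-B=A-(X-A)=A$, the last equality holding because $A$ and its complement are disjoint. Hence $A^{o^{\mathcal{I}^{*}}}=A$.

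For the converse, assume $A^{o^{\mathcal{I}^{*}}}=A$. By the reduction above this gives $A\cap\overline{B}^{\mathcal{I}^{*}}=\phi$, which is equivalent to the inclusion $\overline{B}^{\mathcal{I}^{*}}\subset X-A=B$. On the other hand, the remark preceding this proposition guarantees the reverse inclusion $B\subset\overline{B}^{\mathcal{I}^{*}}$ for every subset $B$. Combining the two inclusions yields $\overline{B}^{\mathcal{I}^{*}}=B$, i.e.\ $X-A$ is $\mathcal{I}^{*}$-closed, which is exactly the statement that $A$ is $\mathcal{I}^{*}$-open.

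I do not expect any genuine obstacle here: the argument is purely set-theoretic once the definitions are unfolded, and the only non-formal ingredient is the monotonicity property $B\subset\overline{B}^{\mathcal{I}^{*}}$ invoked in the converse direction. The single point requiring care is the set-difference identity $A-S=A$ if and only if $A\cap S=\phi$, which must be applied with $S=\overline{B}^{\mathcal{I}^{*}}$ rather than with $B$ itself.
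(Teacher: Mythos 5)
Your proof is correct and follows essentially the same route as the paper's: unfold the definition $A^{o^{\mathcal{I}^{*}}}=A-\overline{(X-A)}^{\mathcal{I}^{*}}$, use $\mathcal{I}^{*}$-closedness of $X-A$ for the forward direction, and for the converse translate $A^{o^{\mathcal{I}^{*}}}=A$ into $A\cap\overline{(X-A)}^{\mathcal{I}^{*}}=\phi$ and combine with the trivial inclusion $X-A\subset\overline{(X-A)}^{\mathcal{I}^{*}}$. If anything, you are slightly more explicit than the paper in citing that trivial inclusion, which the paper leaves implicit.
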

\begin{proof}
 Let $A$ be $\mathcal{I}^{*}$-open subset of topological space $(X,\tau, \mathcal{I})$. Then, the related set $X-A$ is $\mathcal{I}^{*}$-closed, that is $$X-A=\overline{X-A}^{\mathcal{I}^{*}}.$$ 
 
 Therefore, 
 $$A^{o^{\mathcal{I}^{*}}}= A-\overline{(X-A)}^{\mathcal{I}^{*}}=A-(X-A)=A$$
 holds.
 
Conversely assume that  $A=A^{o^{\mathcal{I}^{*}}}$ holds. 
As we defined $$A^{o^{\mathcal{I}^{*}}}:= A-\overline{(X-A)}^{\mathcal{I}^{*}}.$$ Then,
 $$A= A-\overline{(X-A)}^{\mathcal{I}^{*}}$$ implies that  $A \cap \overline{(X-A)}^{\mathcal{I}^{*}}=\phi$.   Therefore, $\overline{(X-A)}^{\mathcal{I}^{*}}\subset X-A$  and this show that  ${(X-A)}^{\mathcal{I}^{*}}= X-A$. 
 
 Hence, $X-A$ is $\mathcal{I}^{*}$- closed and $A$ is $\mathcal{I}^{*}$-open.
\end{proof}

\begin{theorem} Let $A$ be a subset of topological space $(X,\tau,\mathcal{I})$. Then, the following assertions are equivalent:

(i) $A$ is $\mathcal{I}^{*}$- closed.

(ii) $A=\bigcap\lbrace F:F \text{ is  } \mathcal{I}^{*}-\text{ closed and } A \subset F \rbrace$.

\end{theorem}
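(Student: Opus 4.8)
The plan is to prove the two implications separately, with the real content residing in the fact that an arbitrary intersection of $\mathcal{I}^{*}$-closed sets is again $\mathcal{I}^{*}$-closed. Throughout I write $\mathcal{C}:=\lbrace F: F \text{ is } \mathcal{I}^{*}\text{-closed and } A\subseteq F\rbrace$. Note first that $\mathcal{C}$ is nonempty: since $X\subseteq \overline{X}^{\mathcal{I}^{*}}\subseteq X$ we have $\overline{X}^{\mathcal{I}^{*}}=X$, so $X\in\mathcal{C}$, and the intersection in (ii) is well-defined.

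The implication $(i)\Rightarrow(ii)$ is essentially formal. Assuming $A$ is $\mathcal{I}^{*}$-closed, the set $A$ itself belongs to $\mathcal{C}$ because $A\subseteq A$ and $\overline{A}^{\mathcal{I}^{*}}=A$; hence $\bigcap\mathcal{C}\subseteq A$. Conversely, every member of $\mathcal{C}$ contains $A$, so $A\subseteq \bigcap\mathcal{C}$, and equality follows.

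For $(ii)\Rightarrow(i)$ I would first establish the key lemma: if $\lbrace F_{\alpha}\rbrace$ is any family of $\mathcal{I}^{*}$-closed subsets, then $B:=\bigcap_{\alpha} F_{\alpha}$ is $\mathcal{I}^{*}$-closed. Since always $B\subseteq \overline{B}^{\mathcal{I}^{*}}$, it suffices to prove $\overline{B}^{\mathcal{I}^{*}}\subseteq B$. Given $x\in \overline{B}^{\mathcal{I}^{*}}$, by definition there is a sequence $(x_{n})\subseteq B$ with $x_{n}\overset{\mathcal{I}^{*}}{\rightarrow}x$. For each $\alpha$ we have $(x_{n})\subseteq B\subseteq F_{\alpha}$, and the very same $\mathcal{I}^{*}$-convergent sequence then witnesses $x\in \overline{F_{\alpha}}^{\mathcal{I}^{*}}=F_{\alpha}$; as this holds for every $\alpha$, we get $x\in B$. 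This proves the lemma. Granting it, the implication is immediate: the right-hand side of (ii) is an intersection of $\mathcal{I}^{*}$-closed sets, hence $\mathcal{I}^{*}$-closed, and since $A$ equals this intersection, $A$ is $\mathcal{I}^{*}$-closed.

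The one delicate point --- the main obstacle --- is the monotonicity of the $\mathcal{I}^{*}$-closure that is used implicitly in the lemma, namely that a sequence witnessing $x\in\overline{B}^{\mathcal{I}^{*}}$ also witnesses $x\in\overline{F_{\alpha}}^{\mathcal{I}^{*}}$ whenever $B\subseteq F_{\alpha}$. This is clear from the sequence-based definition of $\overline{(\cdot)}^{\mathcal{I}^{*}}$, but it is worth stating explicitly, since this closure is defined through sequences rather than as an abstract topological closure operator and need not be idempotent in general; the argument above deliberately avoids invoking idempotency and relies only on monotonicity together with the defining property $\overline{F_{\alpha}}^{\mathcal{I}^{*}}=F_{\alpha}$.
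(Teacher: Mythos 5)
Your proof is correct and follows essentially the same route as the paper's: the decisive step in both is that a sequence in $A\subseteq F$ witnessing $x\in\overline{A}^{\mathcal{I}^{*}}$ also witnesses $x\in\overline{F}^{\mathcal{I}^{*}}=F$ for every $\mathcal{I}^{*}$-closed $F$ containing $A$. The only difference is organizational: you package this as a general lemma (arbitrary intersections of $\mathcal{I}^{*}$-closed sets are $\mathcal{I}^{*}$-closed) and argue directly, whereas the paper runs the same computation as a proof by contradiction on the specific intersection; your explicit checks that the family is nonempty and that only monotonicity (not idempotency) of the closure is used are sound, if minor, additions.
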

\begin{proof}
$(i) \Rightarrow (ii)$ is obvious. Let $$A= \bigcap \lbrace F: F  \ is \ \mathcal{I}^{*}- \text{ closed and } A \subset F \rbrace. $$ 

To show that $\bar A^{\mathcal{I}^{*}} = A$ hol,ds it is sufficient to prove that $$\bar A^{\mathcal{I}^{*}} \subseteq A$$ holds.  Let $x_{0}\in\bar A^{\mathcal{I}^{*}}$ is an arbitrary point, then there exists a sequence $(x_{n})\subset A$ such that $(x_{n})$ is $\mathcal{I}^{*}$-convergent to $x_{0}$. Assume that $x_{0}\notin A$. So, $$x_{0}\notin \bigcap \{ F:F \ is\ \mathcal{I}^{*}-\text{ closed and } A \subset F \rbrace.$$ 

Then, there exists an $\mathcal{I}^{*}$-closed subset $F$ of $X$ which $x_{0}\notin F$. Since $(x_{n})\subset A$ and $ A \subset F$ then $x_{0}$ must be in $F$ which is contradiction to our assumption.
\end{proof}

\begin{theorem}
 Let $A$ be a subset of topological space $(X,\tau,\mathcal{I})$. Then, the following  assertions are equivalent: 
 
 (i) $A$ is $\mathcal{I}^{*}$- open.
 
 (ii) $A=\bigcup\lbrace U: U \ is\ \mathcal{I}^{*} -\text{open and } U\subset A\rbrace$.

\end{theorem}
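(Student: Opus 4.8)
The plan is to prove this as the exact dual of the preceding theorem characterizing $\mathcal{I}^{*}$-closed sets, by passing to complements so that the union of $\mathcal{I}^{*}$-open subsets of $A$ is transported into the intersection of $\mathcal{I}^{*}$-closed supersets of $X-A$. This avoids any fresh argument about sequences or $\mathcal{I}^{*}$-convergence: everything reduces formally to the statement already proved for $\mathcal{I}^{*}$-closedness.

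First I would dispatch the easy implication $(i)\Rightarrow(ii)$. If $A$ is $\mathcal{I}^{*}$-open, then $A$ itself belongs to the family $\{U:U \text{ is } \mathcal{I}^{*}\text{-open and } U\subset A\}$, so $A\subseteq \bigcup\{U:U \text{ is } \mathcal{I}^{*}\text{-open and } U\subset A\}$; the reverse inclusion is immediate since every member $U$ of the family satisfies $U\subset A$. Hence equality holds.

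For $(ii)\Rightarrow(i)$ I would take complements and apply De Morgan. From $A=\bigcup\{U:U \text{ is } \mathcal{I}^{*}\text{-open},\ U\subset A\}$ we obtain
$$X-A=\bigcap\{X-U:U \text{ is } \mathcal{I}^{*}\text{-open},\ U\subset A\}.$$
The key observation is that $U\mapsto X-U$ is a bijection between the family of $\mathcal{I}^{*}$-open sets $U$ with $U\subset A$ and the family of $\mathcal{I}^{*}$-closed sets $F$ with $X-A\subset F$: the set $X-U$ is $\mathcal{I}^{*}$-closed precisely because $U$ is $\mathcal{I}^{*}$-open, the condition $U\subset A$ is equivalent to $X-A\subset X-U$, and conversely each such $F$ arises as $X-U$ for the $\mathcal{I}^{*}$-open set $U=X-F\subset A$. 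Consequently
$$X-A=\bigcap\{F:F \text{ is } \mathcal{I}^{*}\text{-closed and } X-A\subset F\}.$$
Applying the preceding theorem to the set $X-A$, I conclude that $X-A$ is $\mathcal{I}^{*}$-closed, which by definition means that $A$ is $\mathcal{I}^{*}$-open.

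The only point demanding care, and hence the main obstacle, is checking that the complementation sets up an exact bijection of the two index families, so that De Morgan genuinely transports the union characterization into the intersection characterization needed to invoke the earlier result. Once this bookkeeping is verified, the theorem follows at once from the already-established equivalence for $\mathcal{I}^{*}$-closed sets.
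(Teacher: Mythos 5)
Your proof is correct, but it takes a genuinely different route from the paper's. The paper proves $(ii)\Rightarrow(i)$ via the $\mathcal{I}^{*}$-interior operator: it invokes the proposition that $A$ is $\mathcal{I}^{*}$-open iff $A^{o^{\mathcal{I}^{*}}}=A$, and then argues that every $x_{0}\in A$ lies in some $\mathcal{I}^{*}$-open $U\subset A$ and hence in $A^{o^{\mathcal{I}^{*}}}$ (a step that, to be fully rigorous, needs the monotonicity of $\overline{(\cdot)}^{\mathcal{I}^{*}}$ to see that $U\cap\overline{(X-A)}^{\mathcal{I}^{*}}=\emptyset$, which the paper leaves implicit). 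You instead pass to complements, verify that $U\mapsto X-U$ carries the $\mathcal{I}^{*}$-open subsets of $A$ bijectively onto the $\mathcal{I}^{*}$-closed supersets of $X-A$, apply De Morgan, and then cite the preceding theorem on $\mathcal{I}^{*}$-closed sets. Your duality argument is purely set-theoretic bookkeeping once the closed-set theorem is in hand, so it sidesteps both the interior operator and any further appeal to $\mathcal{I}^{*}$-convergence; the paper's argument is independent of the closed-set theorem but leans on Proposition~2 and an unstated monotonicity fact. One small housekeeping point worth noting in your write-up: the indexing family in (ii) is never empty (the empty set is always $\mathcal{I}^{*}$-open), so the De Morgan step never degenerates to an intersection over an empty family. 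Both routes are sound.
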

\begin{proof}
$(i) \Rightarrow (ii)$ is obvious.

 Conversely,  let $$A=\bigcup\lbrace U:U \ is\ {\mathcal{I}}^{*}-\text{open and }U\subset A \}.$$ 
 
 To prove  $A$ is $\mathcal{I}^{*}$- open subset of $X$, we must to show that  $A= A^{o^{\mathcal{I}^{*}}}$ holds. It is known  that $A^{o^{\mathcal{I}^{*}}}$ always subset of $A$. So, it is sufficient to show that $A\subset A^{o^{\mathcal{I}^{*}}}$. 
 
 Let $x_{0}\in A$ be an arbitrary point, then there is an open subset $U$ of $A$ such that $x_{0}\in U$. Since $U\subset A$ then $x_{0}\in A^{o^{\mathcal{I}^{*}}}$ and this implies that  $$A\subset A^{o^{\mathcal{I}^{*}}}$$ holds. So, proof is ended.
 \end{proof} 
  
\begin{definition}
 Let $\mathcal{I}$  be an ideal and $U$ be a subset of topological space $X$. A sequence $\tilde{x}=(x_{n})\subset X$ is $\mathcal{I}^{*}$-eventually in $U$ if there exists $M\in \mathscr{F}(\mathcal{I})$ such that $x_{m} \in U$ holds for all $m\in M.$
 \end{definition}
 
\begin{proposition}
 Let $\mathcal{I}$ be a maximal ideal of $\Bbb{N}$ and $(X,\tau)$ be a topological space. Then, a subset $U\subseteq X$ is $\mathcal{I}^{*}$-open if and only if each $\mathcal{I}^{*}$-convergent sequence to a point $x\in U$ in $X$ is $\mathcal{I}^{*}$-eventually in $U$. 
 \end{proposition}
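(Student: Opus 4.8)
The plan is to translate both the hypothesis and the conclusion into statements about the single index set $A := \{n \in \mathbb{N} : x_n \notin U\}$, and then to exploit the fact that a maximal ideal decides every subset of $\mathbb{N}$. First I would record a reformulation, valid for any sequence: a sequence $(x_n)$ is $\mathcal{I}^{*}$-eventually in $U$ if and only if $A \in \mathcal{I}$. Indeed, being $\mathcal{I}^{*}$-eventually in $U$ means that $\{n : x_n \in U\} = \mathbb{N}-A$ contains some $M \in \mathscr{F}(\mathcal{I})$; since $\mathscr{F}(\mathcal{I})$ is upward closed, this is the same as $\mathbb{N}-A \in \mathscr{F}(\mathcal{I})$, i.e. $A \in \mathcal{I}$. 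With this dictionary the two implications become manageable.

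For the converse direction, which needs no maximality, I would show that $X-U$ is $\mathcal{I}^{*}$-closed by proving $\overline{X-U}^{\mathcal{I}^{*}} \subseteq X-U$. Take $x \in \overline{X-U}^{\mathcal{I}^{*}}$, so there is a sequence $(y_n) \subset X-U$ with $y_n \overset{\mathcal{I}^{*}}{\rightarrow} x$. If $x \in U$, the hypothesis forces $(y_n)$ to be $\mathcal{I}^{*}$-eventually in $U$, so some $M \in \mathscr{F}(\mathcal{I})$ satisfies $y_m \in U$ for all $m \in M$; but $M \neq \emptyset$ (the ideal is proper) while $(y_n) \subset X-U$, a contradiction. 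Hence $x \in X-U$, as required.

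For the forward direction I would use maximality decisively. Suppose $U$ is $\mathcal{I}^{*}$-open and $x_n \overset{\mathcal{I}^{*}}{\rightarrow} x$ with $x \in U$, witnessed by some $M_0 \in \mathscr{F}(\mathcal{I})$. By the reformulation it suffices to show $A \in \mathcal{I}$. If not, maximality of $\mathcal{I}$ gives $\mathbb{N}-A \in \mathcal{I}$, i.e. $A \in \mathscr{F}(\mathcal{I})$. I would then build a sequence lying entirely in $X-U$ that still $\mathcal{I}^{*}$-converges to $x$: fix any $n_0 \in A$ (so $x_{n_0} \in X-U$) and set $y_n = x_n$ for $n \in A$ and $y_n = x_{n_0}$ otherwise, so that $(y_n) \subset X-U$. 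Taking $M := M_0 \cap A \in \mathscr{F}(\mathcal{I})$, every index of $M$ lies in $A$, so $y_n = x_n$ there, and since $M \subseteq M_0$ the $\mathcal{I}^{*}$-convergence of $(x_n)$ along $M_0$ restricts to $\mathcal{I}^{*}$-convergence of $(y_n)$ along $M$. Thus $x \in \overline{X-U}^{\mathcal{I}^{*}} = X-U$, using that $U$ is $\mathcal{I}^{*}$-open, contradicting $x \in U$. Therefore $A \in \mathcal{I}$, and $(x_n)$ is $\mathcal{I}^{*}$-eventually in $U$.

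The main obstacle is the forward direction, and within it the one delicate point is the passage from $A \notin \mathcal{I}$ to $A \in \mathscr{F}(\mathcal{I})$, which is exactly where maximality is indispensable: for an arbitrary admissible ideal both $A$ and $\mathbb{N}-A$ may avoid $\mathcal{I}$, and then no contradiction with the $\mathcal{I}^{*}$-closedness of $X-U$ is forced. The remaining care is routine, namely checking that $M_0 \cap A$ is a genuine member of $\mathscr{F}(\mathcal{I})$ and that shrinking the defining set $M_0$ of an $\mathcal{I}^{*}$-convergent sequence to a smaller filter set preserves $\mathcal{I}^{*}$-convergence.
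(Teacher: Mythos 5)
Your proof is correct, and in the key direction it proceeds quite differently from the paper. The direction ``hypothesis on sequences $\Rightarrow$ $U$ is $\mathcal{I}^{*}$-open'' is essentially identical to the paper's: both of you take $x\in\overline{X-U}^{\mathcal{I}^{*}}$, pick a witnessing sequence inside $X-U$, and derive a contradiction from $\mathcal{I}^{*}$-eventuality if $x$ were in $U$. The other direction is where you diverge. The paper argues that since $U$ is $\mathcal{I}^{*}$-open it is a neighborhood of $x$, and that $x_{n}\overset{\mathcal{I}^{*}}{\rightarrow}x$ implies $x_{n}\rightarrow x$, and from this reads off $\{n:x_{n}\notin U\}\in\mathcal{I}$; neither of these two intermediate claims is justified in general (an $\mathcal{I}^{*}$-open set need not be $\tau$-open, and $\mathcal{I}^{*}$-convergence does not imply classical convergence), and maximality of $\mathcal{I}$ is never actually invoked there. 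You instead reduce everything to the index set $A=\{n:x_{n}\notin U\}$, use maximality to conclude $A\in\mathscr{F}(\mathcal{I})$ whenever $A\notin\mathcal{I}$, and then build an auxiliary sequence contained in $X-U$ that still $\mathcal{I}^{*}$-converges to $x$ (checking that $M_{0}\cap A\in\mathscr{F}(\mathcal{I})$ and that shrinking the witnessing filter set preserves $\mathcal{I}^{*}$-convergence), contradicting the $\mathcal{I}^{*}$-closedness of $X-U$. Your route is longer but it is the one that actually uses the maximality hypothesis and closes the logical gaps in the paper's version; it also isolates cleanly, via the dictionary ``$\mathcal{I}^{*}$-eventually in $U$ iff $A\in\mathcal{I}$,'' exactly where maximality is indispensable.
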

\begin{proof}
Let us assume  each $\mathcal{I}^{*}$-convergent sequence to a point $x_{0}\in U$ is  $\mathcal{I}^{*}$-eventually in $U$. That is,  if $(x_{n})\subset X$ is a sequence which $(x_{n})$ is $\mathcal{I}^{*}$-convergent to $x_{0} \in U$, then there exists $M \in \mathscr{F}(\mathcal{I})$ such that $x_{m} \in U$ holds  for all $m\in M$.  Now, we are going to show that $U$ is  $I^{*}$-open. For this purpose, we will show that $$\overline{X-U}^{\mathcal{I}^{*}}= (X-U).$$ 

It is clear that $(X-U) \subset \overline{X-U}^{\mathcal{I}^{*}}$. So, to finish the proof it is sufficient to show that $$\overline{X-U}^{\mathcal{I}^{*}} \subseteq (X-U)$$ holds. 

Let $x\in \overline{X-U}^{I^{*}}$ be an arbitrary point. Then, there exists a sequence $(x_{n})\subset (X-U)$ such that  $(x_{n})$ is $ \mathcal{I}^{*}$-convergent to $x$. We must to show that $x\in X-U$. Assume that $x\notin X-U$. Then, $ x \in U$. Since every $\mathcal{I}^{*}$ convergent sequence to a point of $U$ is eventually on $U$, then there exists $M \in \mathscr{F}(\mathcal{I})$ such that  $x_{m}\in U$ for all $m\in M$, but we have $x_{n}\in X-U$, for all $n$ which is contradiction. Hence $x\in X-U$ and $U$ is $\mathcal{I}^{*}$-open.

Conversely, let $U$ is $\mathcal{I}^{*}$ open subset of $(X, \tau)$. Let $(x_{n})\subset X$ be an be an $\mathcal{I}^{*}$-convergent sequence which $\mathcal{I}^{*}$-converges to a point $x \in U$. Since $U$ is $\mathcal{I}^{*}$-open subset of the space $X$, then it is neighborhood of the point $x$, also as the sequence $x_{n}\overset{\mathcal{I}^{*}}{\rightarrow}x$ then $x_{n}\rightarrow x$ so  
 $$E:= \lbrace n: x_{n}\notin U \rbrace \in \mathcal{I}$$ 
 
 Therefore, 
 $$\Bbb{N}-E=\lbrace n: x_{n}\in U \rbrace  =M \in \mathscr{F}(\mathcal{I}) $$

Hence, there exists $m \in M$ such that $x_{m}\in U$. This implys that  $(x_{n})$ is $\mathcal{I}^{*}$-eventually in $U$ and this completed the proof.
\end{proof} 

\begin{theorem}
Let $\mathcal{I}$ be an admissible ideal of $\Bbb{N}$ and $(X,\tau)$ be a topological space, if $U$ and $V$ are  $\mathcal{I}^{*}$-open  subsets of $X$, then $U\cap V$ is  $\mathcal{I}^{*}$-open subset of $X$.
\end{theorem}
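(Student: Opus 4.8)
The plan is to prove the dual statement, namely that $U\cap V$ is $\mathcal{I}^{*}$-open exactly when its complement $X-(U\cap V)=(X-U)\cup(X-V)$ is $\mathcal{I}^{*}$-closed. Since the inclusion $(X-U)\cup(X-V)\subseteq\overline{(X-U)\cup(X-V)}^{\mathcal{I}^{*}}$ always holds, it suffices to establish the reverse inclusion $\overline{(X-U)\cup(X-V)}^{\mathcal{I}^{*}}\subseteq(X-U)\cup(X-V)$.

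To this end I would fix an arbitrary point $x\in\overline{(X-U)\cup(X-V)}^{\mathcal{I}^{*}}$ and choose a witnessing sequence $\tilde{x}=(x_{n})\subset(X-U)\cup(X-V)$ with $x_{n}\overset{\mathcal{I}^{*}}{\rightarrow}x$; the aim is to show $x\in(X-U)\cup(X-V)$. Arguing by contradiction, suppose instead that $x\in U\cap V$.

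The decisive step is to convert the $\mathcal{I}^{*}$-openness of $U$ and of $V$ into information carried along a set of the filter $\mathscr{F}(\mathcal{I})$. Since $U$ is $\mathcal{I}^{*}$-open, the set $X-U$ is $\mathcal{I}^{*}$-closed, so no $\mathcal{I}^{*}$-limit of a sequence lying in $X-U$ can fall into $U$; together with the underlying sequential structure this forces $U$ to be a neighborhood of its point $x$. Invoking Theorem \ref{theorem 1}, from $x_{n}\overset{\mathcal{I}^{*}}{\rightarrow}x$ we obtain $x_{n}\overset{\mathcal{I}}{\rightarrow}x$, whence $\{n:x_{n}\notin U\}\in\mathcal{I}$ and therefore $M_{U}:=\{n:x_{n}\in U\}\in\mathscr{F}(\mathcal{I})$; that is, $(x_{n})$ is $\mathcal{I}^{*}$-eventually in $U$. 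Running the same argument for $V$ produces $M_{V}:=\{n:x_{n}\in V\}\in\mathscr{F}(\mathcal{I})$. This is precisely the ``open implies eventually inside'' half of the preceding Proposition, whose proof uses admissibility but not maximality, so it is available in the present generality.

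Finally I would intersect the two filter members. Because $\mathscr{F}(\mathcal{I})$ is closed under finite intersections, $M:=M_{U}\cap M_{V}\in\mathscr{F}(\mathcal{I})$, and since $\mathcal{I}$ is a proper admissible ideal the dual filter $\mathscr{F}(\mathcal{I})$ is proper, so $M\neq\emptyset$. Choosing any $m\in M$ gives $x_{m}\in U\cap V$, contradicting $x_{m}\in(X-U)\cup(X-V)$. Hence $x\in(X-U)\cup(X-V)$, the complement is $\mathcal{I}^{*}$-closed, and so $U\cap V$ is $\mathcal{I}^{*}$-open. The genuinely delicate point, and the step I expect to be the main obstacle, is the passage from ``$U$ is $\mathcal{I}^{*}$-open'' to ``$U$ is a neighborhood of $x$'': in a space that is not first countable (or at least sequential), the failure of $U$ to be a neighborhood of $x$ need not yield an actual sequence in $X-U$ converging to $x$, so this implication must be underwritten by the ambient sequential hypothesis. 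Everything after it is routine filter bookkeeping.
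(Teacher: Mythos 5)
Your argument is essentially the paper's own proof, unfolded: the paper likewise takes an $\mathcal{I}^{*}$-convergent sequence converging to a point of $U\cap V$, invokes the preceding Proposition to obtain sets $M_{1},M_{2}\in\mathscr{F}(\mathcal{I})$ on which the sequence lies in $U$ and in $V$ respectively, and intersects them to get a single filter set carried into $U\cap V$; your complement-and-contradiction phrasing changes nothing substantive. The ``delicate point'' you flag --- passing from ``$U$ is $\mathcal{I}^{*}$-open'' to ``$U$ acts as a neighborhood of $x$'' so that $\{n:x_{n}\notin U\}\in\mathcal{I}$ --- is precisely the step the paper itself leaves unjustified (its proof of the Proposition simply asserts that an $\mathcal{I}^{*}$-open set is a neighborhood of its points, even though $\mathcal{I}^{*}$-open sets need not be $\tau$-open), so your diagnosis is accurate and your proof stands or falls exactly where the paper's does.
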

\begin{proof}
 
Let $(x_{n})$ be an $\mathcal{I}^{*}$-convergent sequence in $X$ which convergent to a point $x \in U \cap V $, then $x \in U$ and $x \in V$. Since  $U$ and $V$ are $\mathcal{I}^{*}$-open sets and the sequence $(x_{n})$ is $\mathcal{I}^{*}$-converging to a point $x$ in $U$ also in $V$. 

So, by Proposition 3 the sequence $(x_{n})$ is $\mathcal{I}^{*}$-eventually in $U$ and also in $V$, then there exists $M_{1}\in \mathscr{F}(\mathcal{I})$  and  $M_{2}\in \mathscr{F}(\mathcal{I})$ such that  $x_{m} \in U$ for all $m \in M_{1}$ and $x_{m}\in V$ for all $m \in M_{2}$. Let $ M= M_{1}\cap M_{2}$, then there exists $M\in \mathscr{F}(\mathcal{I})$ such that $x_{m}\in U\cap V$ holds for all $m\in M$. This shows that $U\cap V$ is $\mathcal{I}^{*}$-open subset of $X$. 
\end{proof} 
\begin{theorem}
Let $\mathcal{I}$ be a maximal ideal of $\Bbb{N}$ and $(X,\tau)$ be a topological space. A sequence $(x_{n})\subset X$ is  $\mathcal{I}^{*}$-convergent to an element $x \in X $ if and only if  for any  $\mathcal{I}^{*}$-open subset $U$ of $X$ with $x \in U$, there is $M\in \mathscr{F}(\mathcal{I})$ such that  $x_{m}\in U$, for all $m \in M$.
\end{theorem}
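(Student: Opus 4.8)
The plan is to split the equivalence into its two implications, handling the forward one by a direct appeal to the earlier Proposition characterising $\mathcal{I}^{*}$-open sets, and the reverse one by first descending to ordinary $\mathcal{I}$-convergence and then climbing back up with the help of maximality.

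For the forward implication, assume $(x_{n})\overset{\mathcal{I}^{*}}{\rightarrow}x$ and let $U$ be any $\mathcal{I}^{*}$-open subset with $x\in U$. Then $(x_{n})$ is an $\mathcal{I}^{*}$-convergent sequence whose limit lies in the $\mathcal{I}^{*}$-open set $U$, so the Proposition characterising $\mathcal{I}^{*}$-open sets (its ``only if'' half) applies verbatim and gives that $(x_{n})$ is $\mathcal{I}^{*}$-eventually in $U$; that is, there is $M\in\mathscr{F}(\mathcal{I})$ with $x_{m}\in U$ for all $m\in M$. This is exactly the required conclusion, so this direction is essentially free.

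For the reverse implication I would first convert the hypothesis into ordinary $\mathcal{I}$-convergence. Because $\mathcal{I}$ is admissible, every $\tau$-open set is $\mathcal{I}$-open and therefore $\mathcal{I}^{*}$-open, by the two earlier results on $\mathcal{I}$-openness; hence every $\tau$-neighbourhood $U$ of $x$ is in particular $\mathcal{I}^{*}$-open, and the hypothesis furnishes $M_{U}\in\mathscr{F}(\mathcal{I})$ with $x_{m}\in U$ for all $m\in M_{U}$. Then $\{n:x_{n}\notin U\}\subseteq\mathbb{N}\setminus M_{U}\in\mathcal{I}$, so $\{n:x_{n}\notin U\}\in\mathcal{I}$ for every neighbourhood $U$ of $x$; that is, $(x_{n})\overset{\mathcal{I}}{\rightarrow}x$.

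It remains to upgrade $\mathcal{I}$-convergence to $\mathcal{I}^{*}$-convergence, and this is the step I expect to be the main obstacle, since it is exactly here that the maximality of $\mathcal{I}$ must be invoked. The difficulty is one of quantifier order: the definition of $\mathcal{I}^{*}$-convergence requires a \emph{single} set $M\in\mathscr{F}(\mathcal{I})$ along whose increasing enumeration the sequence eventually enters every neighbourhood of $x$, whereas the previous step produces only a separate set $M_{U}$ for each neighbourhood $U$. The plan is to use that, $\mathcal{I}$ being maximal, $\mathscr{F}(\mathcal{I})$ is an ultrafilter, and to manufacture from the family $\{M_{U}\}$ one member $M$ of $\mathscr{F}(\mathcal{I})$ that is almost contained in each $M_{U}$; along such an $M$ the tail of $(x_{m_{k}})$ then lies inside every neighbourhood and $\mathcal{I}^{*}$-convergence follows. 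Executing this pseudo-intersection/diagonal construction—interchanging the universal quantifier over neighbourhoods with the existential quantifier over $M$—is the crux, and is where all the weight of the maximality hypothesis is carried.
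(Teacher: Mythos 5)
Your forward implication is correct and is exactly the paper's argument: the earlier Proposition on maximal ideals states that an $\mathcal{I}^{*}$-convergent sequence whose limit lies in an $\mathcal{I}^{*}$-open set $U$ is $\mathcal{I}^{*}$-eventually in $U$, which is verbatim the stated conclusion, so that half is indeed ``free.''

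The converse is where your proposal has a genuine gap. You correctly diagnose the quantifier problem --- the hypothesis produces a set $M_{U}\in\mathscr{F}(\mathcal{I})$ separately for each $U$, while the definition of $\mathcal{I}^{*}$-convergence demands a single $M$ working for all neighbourhoods simultaneously --- but you never carry out the interchange; you only announce that a ``pseudo-intersection/diagonal construction'' using maximality should do it. That step is not a routine consequence of maximality. Maximality of $\mathcal{I}$ says only that $\mathscr{F}(\mathcal{I})$ is an ultrafilter; it does not guarantee that a countable (let alone arbitrary) family $\{M_{U}\}\subseteq\mathscr{F}(\mathcal{I})$ admits a pseudo-intersection belonging to $\mathscr{F}(\mathcal{I})$. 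For a countable neighbourhood base at $x$ that is precisely the P-point property, which most ultrafilters fail and whose existence is independent of ZFC; in general it is the (AP)-type condition under which $\mathcal{I}$-convergence is known to imply $\mathcal{I}^{*}$-convergence. So the route ``descend to $\mathcal{I}$-convergence, then climb back up via maximality'' cannot be completed as described, and the crux of the theorem is left unproved. For comparison, the paper disposes of this direction with the single sentence that it is ``clear by Definition of $\mathcal{I}^{*}$-convergence,'' which suffers from exactly the quantifier issue you identify; your flagging of the obstacle is to your credit, but as written the proposal does not establish the converse.
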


\begin{proof}
Let $\mathcal{I}$ be a maximal ideal and $(x_{n})$ be an $\mathcal{I}^{*}$-convergent sequence which $\mathcal{I}^{*}$-converges to $x\in X$. Let $U$ be an $\mathcal{I}^{*}$-open subset of $X$ with $x \in U$. Then $(x_{n})$ will be  $\mathcal{I}^{*}$-eventually in $U$. Hence there exists a set $M \in \mathscr{F}(\mathcal{I})$ such that $ x_{m}\in U$, for all $ m \in M $.

The converse of the Theorem is clear by Definition of $I^{*}$-convergence. So, it is ommited here.
\end{proof}

\begin{theorem}
Let  $(X,\tau, \mathcal{I})$ be a topological space. Then, the family $$\tau _{\mathcal{I}^{*}} := \lbrace U \in P(X): U \ is \ \mathcal{I}^{*}-\text{open  set}  \rbrace$$ is a topology on $X$. (It is called $\mathcal{I}^{*}$-sequential topology on $X$)
\end{theorem}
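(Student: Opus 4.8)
The plan is to verify the three topology axioms directly from the definition of $\mathcal{I}^{*}$-openness, treating the extreme sets $\phi$ and $X$, finite intersections, and arbitrary unions in turn; under the standing assumption that $\mathcal{I}$ is admissible, all the ingredients needed are already available. First I would dispose of the two extreme sets. To see that $X \in \tau_{\mathcal{I}^{*}}$ it suffices to check that $X - X = \phi$ is $\mathcal{I}^{*}$-closed, which is exactly the already-noted identity $\overline{\phi}^{\mathcal{I}^{*}} = \phi$. To see that $\phi \in \tau_{\mathcal{I}^{*}}$ I must check that $X - \phi = X$ is $\mathcal{I}^{*}$-closed, i.e. $\overline{X}^{\mathcal{I}^{*}} = X$; this is immediate since $\overline{X}^{\mathcal{I}^{*}} \subseteq X$ by the very definition of the $\mathcal{I}^{*}$-closure, whose elements are points of $X$, while the inclusion $X \subseteq \overline{X}^{\mathcal{I}^{*}}$ always holds.

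Next I would handle finite intersections. The preceding Theorem establishes that whenever $U$ and $V$ are $\mathcal{I}^{*}$-open then so is $U \cap V$; a routine induction on the number of factors then shows that any finite intersection $U_{1} \cap \cdots \cap U_{n}$ of members of $\tau_{\mathcal{I}^{*}}$ is again a member of $\tau_{\mathcal{I}^{*}}$.

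For arbitrary unions I would lean on the characterization theorem asserting that a set $A$ is $\mathcal{I}^{*}$-open if and only if $A = \bigcup\{U : U \text{ is } \mathcal{I}^{*}\text{-open and } U \subseteq A\}$. Given a family $\{U_{\alpha}\}_{\alpha \in \Lambda}$ of $\mathcal{I}^{*}$-open sets, set $W = \bigcup_{\alpha} U_{\alpha}$. Each $U_{\alpha}$ is itself an $\mathcal{I}^{*}$-open set contained in $W$, so $W \subseteq \bigcup\{U : U \text{ is } \mathcal{I}^{*}\text{-open and } U \subseteq W\}$, while the reverse inclusion is trivial because every set appearing in that union is a subset of $W$. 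Hence $W$ coincides with the union of all $\mathcal{I}^{*}$-open subsets of itself, and the characterization theorem delivers $W \in \tau_{\mathcal{I}^{*}}$.

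I expect the union axiom to be the only delicate point, and the route through the characterization theorem is what I would present precisely because it keeps that step clean. For completeness I note the direct alternative: if $x \in \overline{X - W}^{\mathcal{I}^{*}}$ is witnessed by a sequence $(x_{n}) \subset X - W$ with $x_{n} \overset{\mathcal{I}^{*}}{\rightarrow} x$, and one assumes $x \in W$, then $x \in U_{\alpha_{0}}$ for some $\alpha_{0}$, so Proposition 3 forces $(x_{n})$ to be $\mathcal{I}^{*}$-eventually in $U_{\alpha_{0}} \subseteq W$, contradicting $(x_{n}) \subset X - W$ since every $M \in \mathscr{F}(\mathcal{I})$ is nonempty. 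This direct argument, however, invokes Proposition 3 and thereby tacitly requires $\mathcal{I}$ to be maximal, which is why I would favour the characterization-theorem proof that needs only the standing hypotheses.
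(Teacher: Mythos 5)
Your proof is correct, and for the first two axioms it coincides with the paper's: $X$ and $\phi$ are handled via $\overline{\phi}^{\mathcal{I}^{*}}=\phi$ and $\overline{X}^{\mathcal{I}^{*}}=X$, and finite intersections are delegated to the preceding theorem on $U\cap V$ exactly as the paper does. Where you genuinely diverge is the union axiom. The paper argues directly: given $x\in\overline{\bigcap_{\alpha}(X-U_{\alpha})}^{\mathcal{I}^{*}}$, it takes a witnessing sequence $(x_{n})\subset\bigcap_{\alpha}(X-U_{\alpha})$ with $x_{n}\overset{\mathcal{I}^{*}}{\rightarrow}x$, observes that for each fixed $\alpha$ this sequence lies in the $\mathcal{I}^{*}$-closed set $X-U_{\alpha}$, and concludes $x\in X-U_{\alpha}$ for every $\alpha$; this uses nothing beyond the definition of the $\mathcal{I}^{*}$-closure. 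You instead route the argument through the characterization theorem ($A$ is $\mathcal{I}^{*}$-open iff $A=\bigcup\{U: U\ \mathcal{I}^{*}\text{-open},\ U\subseteq A\}$), which is legitimate since that theorem precedes this one, and it does make the union step a one-line verification; the cost is that you import the machinery of the $\mathcal{I}^{*}$-interior proposition on which that characterization rests, whereas the paper's computation is self-contained. One small correction to your closing remark: the ``direct alternative'' does not in fact require Proposition 3 or maximality of $\mathcal{I}$. You do not need the sequence to be $\mathcal{I}^{*}$-eventually in some single $U_{\alpha_{0}}$; it is enough that the witnessing sequence lies in each $X-U_{\alpha}$ and that each of these sets is $\mathcal{I}^{*}$-closed, which is precisely the paper's (maximality-free) argument. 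So the direct route is actually the more economical of the two, not the more delicate one.
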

\begin{proof}
 It is obvious that $X$ and $\phi$ are $\mathcal{I}^{*}$-open sets, because the set $X$ and empty set $\phi$ are $\mathcal{I}^{*}$-closed so their complement are $\mathcal{I}^{*}$-open subset of topological space $X$.
 
Let $ U \text{ and }V \in \tau_{\mathcal{I}^{*}}$. By Theorem 8, we can say that  $U \cap U \in \tau_{\mathcal{I}^{*}}$. Hence, finite intersection of $\mathcal{I}^{*}$-open sets is $\mathcal{I}^{*}$-open.

Let $(U_{\alpha })_{\alpha \in \Lambda }$ be an arbitrary family of elements of $\tau_{\mathcal{I}^{*}}$. We are going to show that $$\bigcup_{\alpha \in \Lambda }U_{\alpha} \in \tau_{\mathcal{I}^{*}}.$$ 

Since $$X-\bigcup_{\alpha \in \Lambda }U_{\alpha} = \bigcap_{\alpha \in \Lambda }(X-U_{\alpha}),$$ then it is sufficient to show that $$\bigcap_{\alpha \in \Lambda }(X-U_{\alpha})$$ is $\mathcal{I}^{*}$- closed. That is, $$\overline{\bigcap_{\alpha \in \Lambda }(X-U_{\alpha})}^{\mathcal{I}^{*}}=\bigcap_{\alpha \in \Lambda }(X-U_{\alpha}).$$ 

Let $x \in \overline{\bigcap_{\alpha \in \Lambda }(X-U_{\alpha})}^{\mathcal{I}^{*}}.$ Then, there exists a sequence $$(x_{n}) \subset \bigcap_{\alpha \in \Lambda }(X-U_{\alpha})$$ such that $x_{n}\overset{\mathcal{I}^{*}}{\rightarrow}x$ satisfies. Therefore, for all $\alpha \in \Lambda$ the sequence $(x_{n}) \subseteq (X-U_{\alpha})$ with $x_{n}\overset{\mathcal{I}^{*}}{\rightarrow}x$. Since $X-U_{\alpha}$ is $\mathcal{I}^{*}$- closed for all $\alpha \in \Lambda$, then $x\in X-U_{\alpha}$. 

Hence, $x \in \bigcap_{\alpha \in \Lambda }(X-U_{\alpha}) $
thus $\bigcap_{\alpha \in \Lambda }X-U_{\alpha}$ is $\mathcal{I}^{*}$- closed.
\end{proof}
\begin{theorem}
   If $\mathcal{I}$ is admissible ideal and the topological space $(X,\tau)$ has no limit point, then every $\mathcal{I}^{*}$-open set is $\mathcal{I}$-open set.  
\end{theorem}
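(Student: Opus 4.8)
The plan is to pass to complements and reduce the statement to a comparison of the two convergences. Recall that $U$ is $\mathcal{I}^{*}$-open exactly when $F:=X-U$ is $\mathcal{I}^{*}$-closed, and $U$ is $\mathcal{I}$-open exactly when $F$ is $\mathcal{I}$-closed. Hence it suffices to prove that every $\mathcal{I}^{*}$-closed set $F$ is $\mathcal{I}$-closed. Since $F\subseteq \overline{F}^{\mathcal{I}}$ always holds, the only thing left to establish is the reverse inclusion $\overline{F}^{\mathcal{I}}\subseteq F$.

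Next I would unpack the hypothesis that $X$ has no limit point. For each $x\in X$ this says that some neighborhood of $x$ meets $X\setminus\{x\}$ trivially, i.e. $\{x\}$ is itself open; thus every singleton is open and every point is isolated. Consequently $\{x\}$ is the smallest neighborhood of $x$, and the convergence notions collapse: taking $U=\{x\}$ and using downward closure of $\mathcal{I}$, the relation $x_{n}\overset{\mathcal{I}}{\rightarrow}x$ becomes simply $\{n:x_{n}\neq x\}\in\mathcal{I}$, with the analogous reduction for $\mathcal{I}^{*}$-convergence.

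The key step is then to show that under this hypothesis $\mathcal{I}$-convergence forces $\mathcal{I}^{*}$-convergence, i.e. the converse to Theorem \ref{theorem 1} (which fails in general, as the example shows, but holds here). Given $x_{n}\overset{\mathcal{I}}{\rightarrow}x$, set $K:=\{n:x_{n}\neq x\}$, which lies in $\mathcal{I}$ by the reduction above, and put $M:=\mathbb{N}-K$, so that $M\in\mathscr{F}(\mathcal{I})$. For every $m\in M$ we have $x_{m}=x$, hence $x_{m}$ lies in every neighborhood of $x$; so the witness $M$ together with $N=0$ verifies the definition of $\mathcal{I}^{*}$-convergence, giving $x_{n}\overset{\mathcal{I}^{*}}{\rightarrow}x$. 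No diagonalization over a shrinking neighborhood base is needed precisely because $\{x\}$ is already open; this is where the no-limit-point assumption does the work and where the general counterexample breaks down.

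Finally I would assemble the pieces: take any $x\in\overline{F}^{\mathcal{I}}$, choose $(x_{n})\subset F$ with $x_{n}\overset{\mathcal{I}}{\rightarrow}x$, upgrade this to $x_{n}\overset{\mathcal{I}^{*}}{\rightarrow}x$ by the previous step, and conclude $x\in\overline{F}^{\mathcal{I}^{*}}=F$ since $F$ is $\mathcal{I}^{*}$-closed. This yields $\overline{F}^{\mathcal{I}}=F$, so $F$ is $\mathcal{I}$-closed and $U=X-F$ is $\mathcal{I}$-open. The main obstacle is conceptual rather than computational: reading ``no limit point'' correctly as forcing every point to be isolated, and recognizing that this is exactly the condition removing the diagonal obstruction to the converse of Theorem \ref{theorem 1}. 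Admissibility of $\mathcal{I}$ functions here as a standing hypothesis; the argument itself uses only that $M=\mathbb{N}-K\in\mathscr{F}(\mathcal{I})$ whenever $K\in\mathcal{I}$.
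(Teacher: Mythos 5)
Your proof is correct and takes essentially the same route as the paper: pass to the complement $F=X-U$, note $F\subseteq\overline{F}^{\mathcal{I}}$ is automatic, and obtain the reverse inclusion by upgrading the $\mathcal{I}$-convergence of a witnessing sequence to $\mathcal{I}^{*}$-convergence, which is exactly where the no-limit-point hypothesis enters. The only difference is that the paper simply cites this upgrading step from Lahiri and Das \cite{10}, whereas you prove it inline (every point is isolated, so $K=\{n:x_{n}\neq x\}\in\mathcal{I}$ and the sequence is constantly $x$ on $M=\mathbb{N}-K\in\mathscr{F}(\mathcal{I})$); your argument for that step is correct.
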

\begin{proof}
Let $U$ be an $\mathcal{I}^{*}$-open set. To prove $U$ is $I$-open set, it is enouh to show that $X-U$ is $\mathcal{I}$-closed. That is,  we are going to focus that  $X-U=\overline{X-U}^{\mathcal{I}}$. It is clear that  $X-U \subseteq \overline{X-U}^{\mathcal{I}}$. 

The proof will finish if we show that  $\overline{X-U}^{\mathcal{I}} \subseteq X-U$. Let  $x \in \overline{X-U}^{\mathcal{I}}$. Then, there exists a sequence $(x_{n})\subset X-U$ such that $(x_{n})$ is $\mathcal{I}$-converges to $x$. Since $\mathcal{I}$ is admissible and $X$ has no limit point, then by \cite{10} the sequence $(x_{n})$ will $\mathcal{I}^{*}$-converge to $x$. 

Therefore, $x \in \overline{X-U}^{\mathcal{I}^{*}}$. As $U$ is $\mathcal{I}^{*}$- open, then $\overline{X-U}^{\mathcal{I}^{*}}=X-U$ holds. This implies that $x \in X-U$ which complete the proof of Theorem.
\end{proof}

\begin{corollary}
 If $X$ has no limit point and $\mathcal{I}$ is admissible ideal, then the topological spaces $(X,\tau_{\mathcal{I}})$ and $(X,\tau _{\mathcal{I}^{*}})$ are coincide.
\end{corollary}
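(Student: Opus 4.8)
The plan is to prove that the two topologies coincide as families of subsets of $X$, i.e.\ that $\tau_{\mathcal{I}} = \tau_{\mathcal{I}^{*}}$, by establishing the two inclusions separately. Recall that $\tau_{\mathcal{I}^{*}}$ has already been shown to be a topology on $X$, and $\tau_{\mathcal{I}}$ is a topology by the analogous argument (it is the $\mathcal{I}$-sequential topology of \cite{18}); since a topology is determined by its collection of open sets, proving the set equality of the two families is exactly what is needed to conclude that $(X,\tau_{\mathcal{I}})$ and $(X,\tau_{\mathcal{I}^{*}})$ coincide.

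For the inclusion $\tau_{\mathcal{I}} \subseteq \tau_{\mathcal{I}^{*}}$, I would invoke the earlier theorem asserting that, when $\mathcal{I}$ is admissible, every $\mathcal{I}$-open subset of $X$ is $\mathcal{I}^{*}$-open; thus every member of $\tau_{\mathcal{I}}$ lies in $\tau_{\mathcal{I}^{*}}$. Structurally this direction rests on $\mathcal{I}^{*}$-convergence implying $\mathcal{I}$-convergence (Theorem \ref{theorem 1}), which forces $\overline{A}^{\mathcal{I}^{*}} \subseteq \overline{A}^{\mathcal{I}}$ and hence makes every $\mathcal{I}$-closed set $\mathcal{I}^{*}$-closed; notably it does not require the no-limit-point hypothesis.

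For the reverse inclusion $\tau_{\mathcal{I}^{*}} \subseteq \tau_{\mathcal{I}}$, I would appeal to the theorem immediately preceding this corollary: under the two standing hypotheses --- $\mathcal{I}$ admissible and $X$ having no limit point --- every $\mathcal{I}^{*}$-open set is $\mathcal{I}$-open, so every member of $\tau_{\mathcal{I}^{*}}$ lies in $\tau_{\mathcal{I}}$. Combining the two inclusions yields $\tau_{\mathcal{I}} = \tau_{\mathcal{I}^{*}}$, which is the asserted coincidence.

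Consequently the corollary is essentially a bookkeeping combination of two results already in hand, and I do not anticipate a genuine obstacle in its proof. The only point requiring care is recognizing that the whole force of the statement is carried by the preceding theorem, whose proof in turn depends on the nontrivial fact from \cite{10} that, in a space with no limit point and an admissible $\mathcal{I}$, $\mathcal{I}$-convergence implies $\mathcal{I}^{*}$-convergence. This is precisely the converse direction that fails in general, as the counterexample earlier in the paper demonstrates, and it is exactly the absence of limit points that rescues it; everything else is formal.
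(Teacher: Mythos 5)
Your proposal is correct and matches the paper's intent exactly: the corollary is stated without proof precisely because it is the conjunction of the two inclusions you cite, namely $\tau_{\mathcal{I}} \subseteq \tau_{\mathcal{I}^{*}}$ from the theorem that every $\mathcal{I}$-open set is $\mathcal{I}^{*}$-open when $\mathcal{I}$ is admissible, and $\tau_{\mathcal{I}^{*}} \subseteq \tau_{\mathcal{I}}$ from the immediately preceding theorem using the no-limit-point hypothesis. Your remark correctly locates where each hypothesis is actually used.
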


\begin{definition}
Let  $\mathcal{I}$ be an ideal of  $\mathbb{N}$
\end{definition}

\begin{definition}
Let  $(X,\tau_{1},\mathcal{I})$ and $(Y,\tau_{2},\mathcal{I})$ be two topological space and $f:X \rightarrow Y$ be a mapping. 
The function $f$ is called;

(i) $\mathcal{I}^{*}$-continuous if $U$ is $\mathcal{I}^{*}$-open subset of $Y$ then  $f^{-1}(U)$ is $\mathcal{I}^{*}$-open subset of  $X$.
  
(ii) Sequentially $\mathcal{I}^{*}$-continuous if for each sequence $(x_{n})$ in $X$ which $(x_{n})$ is $\mathcal{I}^{*}$ convergent to $x$, we have $f(x_{n})$ is $\mathcal{I}^{*}$-convergent to $f(x)$.

\end{definition}

\begin{theorem}
Let $(X,\tau_{1}, \mathcal{I})$ and $(Y, \tau_{2},\mathcal{I})$ be two topological space and $f:X\rightarrow Y$ be a function. Then, $f$ is sequentially $\mathcal{I}^{*}$-continuous if and only if $f$ is $\mathcal{I}^{*}$-continuous function.
\end{theorem}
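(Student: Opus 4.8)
The plan is to read $\mathcal{I}^{*}$-continuity as ordinary continuity between the two $\mathcal{I}^{*}$-sequential topologies $\tau_{\mathcal{I}^{*}}$ and to prove the two implications separately, starting with the easier converse.

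First I would show that sequential $\mathcal{I}^{*}$-continuity implies $\mathcal{I}^{*}$-continuity. Let $V$ be an $\mathcal{I}^{*}$-open subset of $Y$; it suffices to prove that $f^{-1}(V)$ is $\mathcal{I}^{*}$-open, i.e. that $f^{-1}(Y-V)=X-f^{-1}(V)$ is $\mathcal{I}^{*}$-closed, for which I only need the inclusion $\overline{f^{-1}(Y-V)}^{\mathcal{I}^{*}}\subseteq f^{-1}(Y-V)$. Take $x\in\overline{f^{-1}(Y-V)}^{\mathcal{I}^{*}}$; by the definition of the $\mathcal{I}^{*}$-closure there is a sequence $(x_{n})\subset f^{-1}(Y-V)$ with $x_{n}\overset{\mathcal{I}^{*}}{\rightarrow}x$. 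Sequential $\mathcal{I}^{*}$-continuity then gives $f(x_{n})\overset{\mathcal{I}^{*}}{\rightarrow}f(x)$, while $f(x_{n})\in Y-V$ for every $n$; since $V$ is $\mathcal{I}^{*}$-open the set $Y-V$ is $\mathcal{I}^{*}$-closed, so $f(x)\in\overline{Y-V}^{\mathcal{I}^{*}}=Y-V$ and hence $x\in f^{-1}(Y-V)$. This direction uses nothing beyond the definitions of $\mathcal{I}^{*}$-closure and $\mathcal{I}^{*}$-closedness.

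For the forward implication I would argue through the open-set description of $\mathcal{I}^{*}$-convergence. Assume $f$ is $\mathcal{I}^{*}$-continuous and let $x_{n}\overset{\mathcal{I}^{*}}{\rightarrow}x$. Fix an arbitrary $\mathcal{I}^{*}$-open set $V$ with $f(x)\in V$; then $f^{-1}(V)$ is $\mathcal{I}^{*}$-open and contains $x$. Using the characterization that an $\mathcal{I}^{*}$-convergent sequence landing in an $\mathcal{I}^{*}$-open set is $\mathcal{I}^{*}$-eventually in it (the maximal-ideal Proposition and Theorem proved above), there is $M\in\mathscr{F}(\mathcal{I})$ with $x_{m}\in f^{-1}(V)$, hence $f(x_{m})\in V$, for all $m\in M$. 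Thus $(f(x_{n}))$ is $\mathcal{I}^{*}$-eventually in every $\mathcal{I}^{*}$-open neighbourhood of $f(x)$, and applying the same characterization in $Y$ yields $f(x_{n})\overset{\mathcal{I}^{*}}{\rightarrow}f(x)$.

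The main obstacle lies in this forward direction: it hinges on being able to pass between the topology $\tau_{\mathcal{I}^{*}}$ (the $\mathcal{I}^{*}$-open sets) and the notion of $\mathcal{I}^{*}$-convergence, which the paper only bridges under the hypothesis that $\mathcal{I}$ is a maximal ideal. One cannot shortcut this by transporting the witnessing set $M$ from the definition of $x_{n}\overset{\mathcal{I}^{*}}{\rightarrow}x$, because $\mathcal{I}^{*}$-continuity does not entail ordinary $\tau_{1}$--$\tau_{2}$ continuity of $f$ (an $\mathcal{I}^{*}$-open preimage need not be $\tau_{1}$-open), so the $\tau$-convergent subsequence indexed by $M$ is not automatically carried to a $\tau_{2}$-convergent one. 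I would therefore carry out the forward direction under the standing maximal-ideal assumption used in the preceding propositions, or equivalently reformulate $\mathcal{I}^{*}$-convergence using $\mathcal{I}^{*}$-open neighbourhoods so that the two notions match definitionally.
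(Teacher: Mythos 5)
Your proposal follows essentially the same route as the paper: the first direction is the paper's contradiction argument done directly (push a sequence from $f^{-1}(Y-V)$ forward and use $\mathcal{I}^{*}$-closedness of $Y-V$), and the second direction is exactly the paper's argument via preimages of $\mathcal{I}^{*}$-open neighbourhoods. Your caveat about the forward implication is well taken: the paper's own proof of that direction silently applies the ``$\mathcal{I}^{*}$-eventually in every $\mathcal{I}^{*}$-open neighbourhood'' characterization to the set $f^{-1}(V)$, which is only $\mathcal{I}^{*}$-open and not necessarily $\tau_{1}$-open, and that characterization is established in the paper only under the maximal-ideal hypothesis --- a hypothesis absent from the theorem's statement. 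So your explicit flagging of this dependence is an improvement in rigor over the paper's treatment rather than a divergence from its method.
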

\begin{proof}
Let $f$ be sequentially $\mathcal{I}^{*}$-continuous function then for any sequence $\tilde{x}=(x_{n}) \subset X$, if  $(x_{n})$,  $\mathcal{I}^{*}$-converges to $x$ then $f(x_{n}),\mathcal{I}^{*}$-converges to $f(x)$. Let $U$ be any $\mathcal{I}^{*}$-open set in $Y$ assume that $ f^{-1}(U)$ is not $\mathcal{I}^{*}$- open set in $X$ so $X - f^{-1}(U)$ is not $\mathcal{I}^{*}$- closed; i.e, $$X - f^{-1}(U) \neq \overline{X-f^{-1}(U)}^{\mathcal{I}^{*}}.$$ 

This implies that $\overline{X-f^{-1}(U)}^{\mathcal{I}^{*}}$ is not subset of $X - f^{-1}(U)$ so there exists a point $x \in \overline{X-f^{-1}(U)}^{\mathcal{I}^{*}}$ such that $x \notin X - f^{-1}(U)$, this means that there exists a sequence $(x_{n}) \subset X - f^{-1}(U)$ such that it is $\mathcal{I}^*$- converging to $x$ and $x \in f^{-1}(U)$. 

Since $f$ is sequentially continuous so $f(x_{n})$,  $I^{*}$-converging to $f(x)$ this employs that $f(x_{n}) \subset Y-U$ which is not in case so $f^{-1}(U)$ is $\mathcal{I}^{*}$-open subset of $X$.
 
Conversely: Let $f:X \to Y$ be an $\mathcal{I}^{*}$-continuous mapping, let $x_{n}\overset{\mathcal{I}^{*}}{\rightarrow} x$, then for any neighborhood $U$ of $x$, there exists $N \in \mathbb{N}$ and $M \in \mathscr{F}(\mathcal{I})$ such that $x_{m_{k}} \in U$ for all $m_{k}\in M$. Let $V$ be any $\mathcal{I}^{*}$-open neighborhood of $f(x)$, then $f^{-1}(V)$ is $\mathcal{I}^{*}$-open in $X$ and contain $x$, then there exists $N\in \mathbb{N}$, $M\in \mathscr{F}(\mathcal{I})$ such that $x_{m_{k}}\in f^{-1}(V)$ this implies that $f(x_{m_{k}}) \in V$ hence $f(x_{n})\overset{\mathcal{I}^{*}}{\rightarrow}f(x)$.    
\end{proof}


\section{Sequentially  $I^{*}$-compact subsets}

The notion of compactness is one of the most significant topological properties. Compactness was formally introduced by M. Frechet  in 1906. After that many types of compactness introduced by mathematicians. 

The concept of $I$-compactness was deﬁned by Newcomb \cite{newcomb} and had been studied by Rancin \cite{rancin}. 

In this section, we are going to observe the notion of $\mathcal{I}^{*}$-sequentially compact which is a generalization of $\mathcal{I}$-sequentially compact. 
\begin{definition}
Let  $(X,\tau,\mathcal{I})$ be a topological space.

(i) $X$ is called  $\mathcal{I}^{*}$-separated if for any two distinct points $x,y \in X$ there is  $\mathcal{I}^{*}$-open subsets $U$ and $V$ containing $x$ and $y$ respectively, and $U \cap V = \phi$.
    
(ii) A subset $F$ in $X$ is called  $\mathcal{I}^{*}$-compact if for every   $\mathcal{I}^{*}$-open cover of $F$ there exists a finite subcover of $F$.
    
(iii)  A subset $F \subset X$ is said to sequentially $\mathcal{I}^{*}$-compact if any sequence $(x_{n})\subset F$ has an $\mathcal{I}^{*}$-convergent sub sequence $(x_{n_{k}})\subset F$ such that  $x_{n_{k}}\overset{I^{*}}{\rightarrow}x\in F$. 
   
(iv) The space $X$ is said to be countably $\mathcal{I}^{*}$-compact if for every countably $\mathcal{I}^{*}$-open cover of $X$ there exists a finite subcover.
\end{definition}

\begin{theorem}\label{theorem 13}
Let $(X,\tau,\mathcal{I})$ be a topological space and $(Y,\tau_{Y},\mathcal{I})$ be a subspace of $X$. If the set $Y$ is $\mathcal{I}$-compact, then it is $\mathcal{I}^{*}$-compact.
\begin{proof}
Let $Y$ be a $\mathcal{I}$-compact subset of $X$. So, every $\mathcal{I}$-open cover of $Y$ has a finite subcover. Since every $I$-open set is $\mathcal{I}^{*}$-set so every $\mathcal{I}^{*}$-open cover of $Y$ has finite subcover, this show that $Y$ is $\mathcal{I}^{*}$-compact.
\end{proof}
\end{theorem}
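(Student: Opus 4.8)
The plan is to start from an arbitrary $\mathcal{I}^{*}$-open cover of $Y$ and manufacture from it an $\mathcal{I}$-open cover to which the hypothesis of $\mathcal{I}$-compactness can be applied, and then to lift the resulting finite subcover back to the original family. Concretely, I would fix an $\mathcal{I}^{*}$-open cover $\{U_{\alpha}\}_{\alpha \in \Lambda}$ of $Y$ and, for each point $y \in Y$, choose an index $\alpha(y) \in \Lambda$ with $y \in U_{\alpha(y)}$.

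The next step is to replace each $U_{\alpha(y)}$ by an $\mathcal{I}$-open set that still surrounds $y$. Suppose one can produce, for every $y \in Y$, an $\mathcal{I}$-open set $V_{y}$ with $y \in V_{y} \subseteq U_{\alpha(y)}$. Then $\{V_{y}\}_{y \in Y}$ is an $\mathcal{I}$-open cover of $Y$, so by the assumed $\mathcal{I}$-compactness there are finitely many points $y_{1}, \ldots, y_{m} \in Y$ with $Y \subseteq \bigcup_{i=1}^{m} V_{y_{i}}$. Since $V_{y_{i}} \subseteq U_{\alpha(y_{i})}$ for each $i$, the finite family $U_{\alpha(y_{1})}, \ldots, U_{\alpha(y_{m})}$ is a finite subcover of the original $\mathcal{I}^{*}$-open cover, and hence $Y$ is $\mathcal{I}^{*}$-compact. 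The earlier theorem that every $\mathcal{I}$-open set is $\mathcal{I}^{*}$-open guarantees that each $V_{y}$ also lives in the $\mathcal{I}^{*}$-topology, so this lifting is internally consistent.

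The hard part will be exactly the refinement carried out in the middle step: producing, inside a given $\mathcal{I}^{*}$-open set that contains $y$, a smaller $\mathcal{I}$-open set that still contains $y$. The inclusion supplied by the earlier results runs only from $\mathcal{I}$-open sets into $\mathcal{I}^{*}$-open sets, so it does not by itself hand us any $\mathcal{I}$-open set sitting inside a prescribed $\mathcal{I}^{*}$-open set. What this step really demands is that the $\mathcal{I}$-open sets form a base for the topology $\tau_{\mathcal{I}^{*}}$, equivalently that every $\mathcal{I}^{*}$-open set is a union of $\mathcal{I}$-open sets. I would therefore concentrate the work on establishing this base property directly from the definition of $\mathcal{I}^{*}$-closure, unwinding what it means for a point to lie in $\overline{X-U}^{\mathcal{I}^{*}}$ and comparing it carefully with membership in $\overline{X-U}^{\mathcal{I}}$. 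This is the single place where the whole argument stands or falls, and it is where I expect essentially all of the difficulty to reside.
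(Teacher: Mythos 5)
Your plan stalls exactly where you predicted it would, and the gap is not closable. The refinement you need in the middle step --- inside every $\mathcal{I}^{*}$-open set containing $y$, an $\mathcal{I}$-open set still containing $y$ --- says precisely that the $\mathcal{I}$-open sets form a base for $\tau_{\mathcal{I}^{*}}$. Since the $\mathcal{I}$-open sets themselves constitute a topology $\tau_{\mathcal{I}}$ (so arbitrary unions of $\mathcal{I}$-open sets are again $\mathcal{I}$-open), this base property would force $\tau_{\mathcal{I}^{*}}\subseteq\tau_{\mathcal{I}}$, i.e.\ coincidence of the two topologies. But the paper's central theme is the opposite: $\tau_{\mathcal{I}^{*}}$ is finer than $\tau_{\mathcal{I}}$, in general strictly so --- its Example 2 exhibits a sequence that is $\mathcal{I}$-convergent but not $\mathcal{I}^{*}$-convergent, and coincidence of the topologies is obtained only under strong extra hypotheses (an admissible ideal on a space with no limit points). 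So the lemma on which, as you say, the whole argument ``stands or falls'' is false in general, and your approach cannot be completed.

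You should know, however, that your diagnosis is sharper than the paper's own proof, which commits exactly the reversal you were careful to avoid. The paper argues: every $\mathcal{I}$-open cover of $Y$ has a finite subcover; every $\mathcal{I}$-open set is $\mathcal{I}^{*}$-open; ``so'' every $\mathcal{I}^{*}$-open cover of $Y$ has a finite subcover. That inference runs the inclusion in the wrong direction: a member of an $\mathcal{I}^{*}$-open cover need not be $\mathcal{I}$-open, so the hypothesis of $\mathcal{I}$-compactness never gets applied to it. What the inclusion $\tau_{\mathcal{I}}\subseteq\tau_{\mathcal{I}^{*}}$ actually yields, immediately and correctly, is the converse statement: if $Y$ is $\mathcal{I}^{*}$-compact then $Y$ is $\mathcal{I}$-compact, since every $\mathcal{I}$-open cover is in particular an $\mathcal{I}^{*}$-open cover. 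As stated, the theorem asks compactness to pass from a coarser topology to a finer one, which fails in general; neither the paper's one-line proof nor any completion of your proposal establishes it, but your write-up at least isolates the exact point of failure instead of waving it through.
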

\begin{definition} \cite{15}
  Let $X$ be a normed space and $\mathcal{I}$ be an ideal of $\mathbb{N}$. A sequence $\tilde{x}=(x_{n})$ in $X$ is $\mathcal{I}$-bounded if there exist $N>0$ such that $$\lbrace n \in \Bbb{N}:\lVert x_{n} \rVert >N \rbrace \in \mathcal{I}$$ holds.
  
\end{definition}
\begin{definition}

Let $X$ be a normed space, and $\mathcal{I}$ be an ideal of $\mathbb{N}$. A sequence $\tilde{x}=(x_{n})$ in $X$ is said to be $\mathcal{I}^{*}$-bounded if there exists $M\in \mathscr{F}(\mathcal{I})$ such that $(x_{n})_{n \in M}$ is bounded.
\end{definition}
\begin{theorem}
Let $X$ be a normed space and $\mathcal{I}$ be an ideal of $\mathbb{N}$. Then, every $\mathcal{I}$-bonded sequence is $\mathcal{I}^{*}$-bounded.
\end{theorem}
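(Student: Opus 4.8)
The plan is to read off the required filter member directly as the complement of the exceptional set supplied by $\mathcal{I}$-boundedness, with no limiting argument involved. Assume $\tilde{x}=(x_n)$ is $\mathcal{I}$-bounded. By definition there is a constant $N>0$ such that
$$A := \lbrace n \in \mathbb{N} : \lVert x_n \rVert > N \rbrace \in \mathcal{I}.$$
First I would set $M := \mathbb{N} - A$. Because $\mathscr{F}(\mathcal{I}) = \lbrace B \subseteq \mathbb{N} : \mathbb{N} - B \in \mathcal{I} \rbrace$ and $\mathbb{N} - M = A \in \mathcal{I}$, this gives $M \in \mathscr{F}(\mathcal{I})$ immediately.

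Next I would check that $(x_n)_{n \in M}$ is bounded. By the very construction of $M$ we have $\lVert x_n \rVert \le N$ for every $n \in M$, so the subfamily indexed by $M$ is bounded by $N$. Since $M$ lies in the dual filter, this is precisely the witnessing condition in the definition of $\mathcal{I}^*$-boundedness, and the proof closes.

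The whole argument rests on the single passage $A \mapsto \mathbb{N} - A$ together with the defining relation between an ideal and its dual filter, so I do not anticipate any genuine obstacle. It is instructive to contrast this with the convergence setting, where $\mathcal{I}^*$-convergence is strictly stronger than $\mathcal{I}$-convergence: there the exceptional set is allowed to vary with the chosen neighborhood $U$, so one cannot in general isolate a single filter member along which ordinary convergence holds. Boundedness, by contrast, is controlled by one threshold $N$, hence the exceptional set is fixed and its complement serves directly; the same observation in fact yields the converse implication (if $\lVert x_n \rVert \le N$ on some $M \in \mathscr{F}(\mathcal{I})$, then $\lbrace n : \lVert x_n \rVert > N \rbrace \subseteq \mathbb{N} - M \in \mathcal{I}$ is itself in $\mathcal{I}$ by downward closure), so for normed spaces the two notions of boundedness coincide.
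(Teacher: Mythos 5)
Your proof is correct and follows essentially the same route as the paper: both take the exceptional set $A=\{n:\lVert x_n\rVert>N\}\in\mathcal{I}$ supplied by $\mathcal{I}$-boundedness and pass to its complement $M=\mathbb{N}-A\in\mathscr{F}(\mathcal{I})$, on which the sequence is bounded by $N$. Your version is in fact slightly cleaner (writing $\lVert x_n\rVert\le N$ on $M$ rather than the paper's strict inequality), and the closing observation that the two boundedness notions coincide is a correct bonus.
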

\begin{proof}
Assume that $(x_{n})\subset X$ is  $\mathcal{I}$-bounded sequence in $X$. Then, there exists $K>0$ such that  $$\lbrace n:\lVert x_{n} \rVert >K \rbrace \in \mathcal{I} $$ hold. If we denote $$M:=\{m: \lVert x_{n} \rVert <K\}$$ Then, $M\in \mathscr{F}(\mathcal{I})$ and $\lVert x_{n} \rVert <K$, for all $n \in M$. Hence,  $(x_{n})$ is $\mathcal{I}^{*}$- bounded sequence.
\end{proof}

\begin{corollary}
Let $X$ be a normed space and $\mathcal{I}$ be an ideal  of $\mathbb{N}$. Then, every bounded sequence is $\mathcal{I}^{*}$-bounded.
\end{corollary}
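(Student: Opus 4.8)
The plan is to deduce this directly from the preceding theorem by observing that every bounded sequence is, in particular, $\mathcal{I}$-bounded; once that reduction is in place, the conclusion is immediate. So the whole corollary reduces to one trivial containment plus an application of a result already proved.

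First I would fix a bounded sequence $(x_{n}) \subset X$, so that there is a constant $K>0$ with $\lVert x_{n} \rVert \leq K$ for every $n \in \mathbb{N}$. Then the exceptional set $\lbrace n \in \mathbb{N} : \lVert x_{n} \rVert > K \rbrace$ is empty. Since $\mathcal{I}$ is an ideal, it is nonempty and closed under taking subsets, so the empty set belongs to $\mathcal{I}$; hence $\lbrace n \in \mathbb{N} : \lVert x_{n} \rVert > K \rbrace \in \mathcal{I}$, which is exactly the statement that $(x_{n})$ is $\mathcal{I}$-bounded with the same bound $K$. Having established this, I would simply invoke the preceding theorem, that every $\mathcal{I}$-bounded sequence is $\mathcal{I}^{*}$-bounded, to conclude that $(x_{n})$ is $\mathcal{I}^{*}$-bounded.

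Alternatively, one can argue in a single line without appealing to the theorem by choosing $M = \mathbb{N}$ directly. Because $\mathbb{N} - \mathbb{N} = \phi \in \mathcal{I}$, we have $\mathbb{N} \in \mathscr{F}(\mathcal{I})$, and $(x_{n})_{n \in \mathbb{N}}$ is bounded by hypothesis, so the defining condition for $\mathcal{I}^{*}$-boundedness holds with this $M$. There is essentially no obstacle in this corollary; the only point deserving a word of care is the verification that $\phi \in \mathcal{I}$, which holds for any nontrivial ideal and is precisely what guarantees $\mathbb{N} \in \mathscr{F}(\mathcal{I})$ and hence the admissibility of the choice $M = \mathbb{N}$.
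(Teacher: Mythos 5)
Your proposal is correct and its main argument is exactly the paper's proof: the paper also deduces the corollary by noting that a bounded sequence is $\mathcal{I}$-bounded and then invoking the preceding theorem. Your alternative one-line argument taking $M=\mathbb{N}\in\mathscr{F}(\mathcal{I})$ is a valid (and even more direct) shortcut, but it does not change the substance.
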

\begin{proof}
Let $X$ be a normed space and $(x_{n})\subset X$ be a bounded sequence in $X$. Then, the sequence $(x_{n})$ is $\mathcal{I}$-bounded \cite{ab} and by Theorem \ref{theorem 13} it is $\mathcal{I}^{*}$-bounded.
\end{proof}

\begin{theorem}
Let $(X,\tau, \mathcal{I})$ be a topological space and $\Bbb{R}$ under it's usual topology and $f:X \rightarrow\Bbb{R}$ a sequentially $\mathcal{I}^{*}$-continuous mapping. If $A$ is sequentially $\mathcal{I}^{*}$-compact subset of $X$, then $f(A)$ is $\mathcal{I}^{*}$-bounded. 
\begin{proof}
Let $f(A)$ is not $\mathcal{I}^{*}$-bounded then there exists a sequence $(y_{n})$ in $f(A)$ such that it is not $\mathcal{I}^{*}$-bounded then $$\lbrace n \in \Bbb{N}:\left|  y_{n} \right|  <M \rbrace \notin \mathscr{F}(\mathcal{I}).$$ also there exists a sequence $(x_{n})\subset A$ such that  $f(x_{n})=y_{n}$ holds $n \in \Bbb{N}$. 

Since $A$ is sequentially $\mathcal{I}^{*}$- compact, then there exists a convergent subsequence $(x_{n_{k}})$ of $(x_{n})$ which is $\mathcal{I}^{*}$-convergent  to a point $x_{0}$ of $A$. Because $f$ is sequentially $\mathcal{I}^{*}$-continuous function then $f(x_{n})$ is $ \mathcal{I}^{*}$-convergent to $f(x_{0})$. So, there exists $E\in \mathscr{F}(\mathcal{I}), (\Bbb{N}-E\in \mathcal{I})$ $$E=\lbrace m_{1}< m_{2}< \cdots < m_{k}< \cdots \rbrace$$ such that for any neighborhood $U$ of $f(x_{0})$, there exists $N \in \Bbb{N}$ such that        $f(x_{n_{m_{k}}}) \in U$ holds for all $m_{k}>N$. This implies that  $(y_{n})=f(x_{n_{k}})$ is $\mathcal{I}$-convergent to $f(x_{0})$. Then, for any neighborhood $U$ of $f(x_{0})$,$$\lbrace n \in \Bbb{N}:\lvert f(x_{n}) \rvert >M \rbrace \in \mathcal{I}$$so  $$\lbrace n \in \Bbb{N}:\lvert x_{n} \rvert <M \rbrace \in \mathscr{F}(\mathcal{I}).$$ Which is not in case so $f(A)$ is $\mathcal{I}^{*}$-bounded.
\end{proof}
\end{theorem}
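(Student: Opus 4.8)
The plan is to argue by contradiction, turning the failure of $\mathcal{I}^{*}$-boundedness into a sequence inside $A$ on which the sequential $\mathcal{I}^{*}$-compactness hypothesis can be exercised. Before starting the contradiction, I would isolate one auxiliary observation, which is really the engine of the proof: every $\mathcal{I}^{*}$-convergent sequence in $\mathbb{R}$ is $\mathcal{I}^{*}$-bounded. Indeed, if $z_{n}\overset{\mathcal{I}^{*}}{\rightarrow}z$, then by the definition of $\mathcal{I}^{*}$-convergence there is a set $M\in\mathscr{F}(\mathcal{I})$, say $M=\{m_{1}<m_{2}<\cdots\}$, along which $(z_{m_{k}})$ converges to $z$ in the ordinary sense; an ordinary convergent sequence is bounded, so $(z_{n})_{n\in M}$ is bounded, which is exactly the definition of $\mathcal{I}^{*}$-boundedness.

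Next I would assume $f(A)$ is not $\mathcal{I}^{*}$-bounded and extract a witnessing sequence $(y_{n})\subset f(A)$, then for each $n$ choose a preimage $x_{n}\in A$ with $f(x_{n})=y_{n}$, producing a sequence $(x_{n})\subset A$. Applying sequential $\mathcal{I}^{*}$-compactness of $A$ gives a subsequence $(x_{n_{k}})$ and a point $x_{0}\in A$ with $x_{n_{k}}\overset{\mathcal{I}^{*}}{\rightarrow}x_{0}$. Sequential $\mathcal{I}^{*}$-continuity of $f$ then yields $y_{n_{k}}=f(x_{n_{k}})\overset{\mathcal{I}^{*}}{\rightarrow}f(x_{0})$, and by the auxiliary observation $(y_{n_{k}})$ is $\mathcal{I}^{*}$-bounded, i.e. bounded along a set belonging to $\mathscr{F}(\mathcal{I})$. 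The contradiction is then read off against the choice of $(y_{n})$.

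The hard part will be making that last contradiction honest, and this is the main obstacle. A $\mathcal{I}^{*}$-bounded \emph{subsequence} does not by itself contradict a sequence that merely fails to be $\mathcal{I}^{*}$-bounded, because the index set carrying the subsequence need not lie in $\mathscr{F}(\mathcal{I})$. The clean way around this is to prove the genuinely stronger statement that $f(A)$ is a bounded subset of $\mathbb{R}$: if it were unbounded I would pick $(y_{n})\subset f(A)$ with $|y_{n}|\to\infty$, and then \emph{every} subsequence, in particular $(y_{n_{k}})$, still satisfies $|y_{n_{k}}|\to\infty$. Restricting to the filter set $M$ from the auxiliary observation, $(y_{n_{k}})_{k\in M}$ would both converge to the finite value $f(x_{0})$ and have moduli tending to infinity (using that $M\in\mathscr{F}(\mathcal{I})$ is infinite, since $\mathcal{I}$ is admissible and proper), which is impossible. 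Once $f(A)$ is bounded as a set, the $\mathcal{I}^{*}$-boundedness of every sequence drawn from $f(A)$ is immediate, so this route both removes the gap and delivers the stated conclusion.
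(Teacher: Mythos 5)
Your proposal is correct, and it is worth noting that the ``hard part'' you single out is precisely where the paper's own proof goes wrong. The paper follows the same skeleton (contradiction, pass to preimages, extract an $\mathcal{I}^{*}$-convergent subsequence $(x_{n_k})$ by compactness, push forward by sequential $\mathcal{I}^{*}$-continuity), but at the final step it silently identifies the subsequence $(f(x_{n_k}))$ with the full sequence $(y_n)$ and concludes $\lbrace n : \lvert y_n\rvert < M\rbrace \in \mathscr{F}(\mathcal{I})$ from the convergence of the subsequence alone. As you observe, this is not licensed: the set of indices $\lbrace n_k\rbrace$ carrying the subsequence need not belong to $\mathscr{F}(\mathcal{I})$, so $\mathcal{I}^{*}$-boundedness of the subsequence does not contradict the failure of $\mathcal{I}^{*}$-boundedness of $(y_n)$. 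Your repair --- proving the stronger claim that $f(A)$ is bounded as a subset of $\mathbb{R}$, by choosing $\lvert y_n\rvert \to \infty$ so that every subsequence inherits divergent moduli, and then colliding that with ordinary convergence to $f(x_0)$ along the infinite filter set $M$ (infinite because $\mathcal{I}$ is admissible and proper) --- is sound, and the deduction that boundedness of the set gives $\mathcal{I}^{*}$-boundedness of every sequence in it (take $M=\mathbb{N}\in\mathscr{F}(\mathcal{I})$) is immediate. Your auxiliary lemma that $\mathcal{I}^{*}$-convergent real sequences are $\mathcal{I}^{*}$-bounded is also correct, though in the final form of your argument you only use the ordinary convergence along $M$ rather than the lemma itself. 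In short: same strategy as the paper, but your version closes a genuine gap that the paper leaves open, at the modest price of proving a stronger intermediate statement.
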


\begin{lemma}
Let $X$ and $Y$ be topological spaces. $X$ and $Y$ are sequentially $\mathcal{I}^*$-compact, if and only if $X \times Y$ is. 
\end{lemma}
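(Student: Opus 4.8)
The plan is to reduce both implications to a single observation: since $\mathcal{I}$ is a proper admissible ideal, $\mathcal{I}^{*}$-convergence of a sequence is nothing but ordinary convergence along an index set $M\in\mathscr{F}(\mathcal{I})$, which is automatically infinite. Indeed, if $(z_{n})\overset{\mathcal{I}^{*}}{\rightarrow}z$ then by definition there is $M=\{m_{1}<m_{2}<\cdots\}\in\mathscr{F}(\mathcal{I})$ with $(z_{m_{k}})_{k}\rightarrow z$ in the ordinary sense; because $\mathbb{N}\setminus M\in\mathcal{I}$ while $\mathbb{N}\notin\mathcal{I}$, the set $M$ is infinite, so $(z_{m_{k}})_{k}$ is a genuine subsequence converging ordinarily to $z$. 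Conversely, every ordinarily convergent sequence is $\mathcal{I}^{*}$-convergent, taking $M=\mathbb{N}\in\mathscr{F}(\mathcal{I})$. I will also use the standard facts that convergence in a product topology is coordinatewise and that the coordinate projections are continuous.

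For the implication ``$X\times Y$ sequentially $\mathcal{I}^{*}$-compact $\Rightarrow$ $X$ and $Y$ sequentially $\mathcal{I}^{*}$-compact'', fix $y_{0}\in Y$ and, given $(x_{n})\subset X$, apply sequential $\mathcal{I}^{*}$-compactness of $X\times Y$ to $(x_{n},y_{0})$. This yields a subsequence $(x_{n_{k}},y_{0})$ that is $\mathcal{I}^{*}$-convergent to some $(x,y')$; restricting to the associated $M\in\mathscr{F}(\mathcal{I})$ and projecting onto the first coordinate produces an ordinarily convergent, hence $\mathcal{I}^{*}$-convergent, subsequence of $(x_{n})$ with limit $x\in X$. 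The same argument using the second projection handles $Y$.

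For the converse ``$X$ and $Y$ sequentially $\mathcal{I}^{*}$-compact $\Rightarrow$ $X\times Y$ sequentially $\mathcal{I}^{*}$-compact'', start from a sequence $(x_{n},y_{n})\subset X\times Y$. First I would extract from $(x_{n})$ an $\mathcal{I}^{*}$-convergent subsequence and, using the observation above, pass to its index set in $\mathscr{F}(\mathcal{I})$ to obtain a genuine subsequence $(x_{m_{i}})_{i}$ converging \emph{ordinarily} to some $x\in X$. Next I would apply sequential $\mathcal{I}^{*}$-compactness of $Y$ to $(y_{m_{i}})_{i}$ and again restrict to its index set in $\mathscr{F}(\mathcal{I})$, obtaining a further subsequence $(y_{m_{i_{\ell}}})_{\ell}$ converging ordinarily to some $y\in Y$. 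Since $(x_{m_{i_{\ell}}})_{\ell}$ is a subsequence of the ordinarily convergent $(x_{m_{i}})_{i}$, it still converges ordinarily to $x$; hence both coordinates of $(x_{m_{i_{\ell}}},y_{m_{i_{\ell}}})_{\ell}$ converge ordinarily, so this sub-subsequence converges to $(x,y)$ in the product and is therefore $\mathcal{I}^{*}$-convergent to a point of $X\times Y$.

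The hard part will be this second implication, and the essential point is to carry out the extractions \emph{after} converting to ordinary convergence. A naive iterated-subsequence argument that keeps both coordinates in $\mathcal{I}^{*}$ form fails, because $\mathcal{I}^{*}$-convergence is not inherited by an arbitrary subsequence: the set $M_{1}\in\mathscr{F}(\mathcal{I})$ controlling the first coordinate need not remain in $\mathscr{F}(\mathcal{I})$ once the second extraction reindexes the sequence. Converting the first coordinate to an ordinarily convergent subsequence before touching the second coordinate sidesteps this, since ordinary convergence \emph{is} inherited by every subsequence. Throughout I only use that $\mathcal{I}$ is a proper admissible ideal (so that index sets in $\mathscr{F}(\mathcal{I})$ are infinite) and that convergence in a product is coordinatewise; no maximality of $\mathcal{I}$ is required.
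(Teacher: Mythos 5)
Your proof is correct, and it is genuinely more careful than the one in the paper. The paper's argument for ``$X$ and $Y$ compact $\Rightarrow X\times Y$ compact'' extracts an $\mathcal{I}^{*}$-convergent subsequence $(x_{n_{k}})$ from the first coordinate and, \emph{independently}, a subsequence $(y_{n_{j}})$ from the second, and then pairs them as $(x_{n_{k}},y_{n_{j}})$; since the index sets $\{n_{k}\}$ and $\{n_{j}\}$ need not interleave (they could even be disjoint), this pairing is not a subsequence of $(x_{n},y_{n})$, so the paper's extraction step does not actually go through as written. Your two-stage extraction repairs exactly this, and you correctly identify the further obstacle that a naive iterated extraction would face: $\mathcal{I}^{*}$-convergence is not inherited by arbitrary subsequences (the paper's own Example~3 exhibits this failure), so the set $M_{1}\in\mathscr{F}(\mathcal{I})$ controlling the first coordinate is not guaranteed to survive the second reindexing. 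Your device of first passing to the index set $M\in\mathscr{F}(\mathcal{I})$ --- which is infinite because $\mathcal{I}$ is proper and admissible --- to convert $\mathcal{I}^{*}$-convergence into ordinary convergence of a genuine subsequence, and only then extracting in the second coordinate, is the right fix, since ordinary convergence \emph{is} subsequence-hereditary and implies $\mathcal{I}^{*}$-convergence via $M=\mathbb{N}$. For the reverse implication the paper invokes the projections $P_{x},P_{y}$ together with the (separately stated) preservation of sequential $\mathcal{I}^{*}$-compactness under sequentially $\mathcal{I}^{*}$-continuous maps, whereas your argument with a frozen second coordinate $y_{0}$ is elementary and self-contained; both are fine (modulo the degenerate case where one factor is empty, which affects both versions equally). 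In short, your route buys an actually valid proof of the harder implication at the modest cost of using the properness/admissibility of $\mathcal{I}$ explicitly, hypotheses the paper assumes throughout anyway.
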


\begin{proof}
Let $X$ and $Y$ be $\mathcal{I}^*$- compact then for any sequence $(X_{n}) \subset X$ and $(y_{n}) \subset Y$ there exists convergent sub sequence $(x_{n_{k}}) \subset X$ and $(y_{n_{j}})\subset Y$. Which are $\mathcal{I}^*$*- converging to a point $x \in X$ and $y \in Y$ respectively. Thus, for sequence $(x_{n},y_{n})\subset X \times Y$ there exists sub sequence $(x_{n_{k}},y_{n_{j}})$, which are $\mathcal{I}^{*}$-converging to  $(x,y)\in X \times Y$. Hence $X\times Y$ is sequentially $\mathcal{I}^{*}$-compact.

Conversely: Let $P_{x}: X \times Y \rightarrow X$ and $P_{y}: X \times Y \rightarrow Y$ be two projections and $X \times Y$ be compact, then  $P_{x}(X \times Y)$ and $P_{y}(X\times Y)$ are sequentially $\mathcal{I}^{*}$- compact, this show that  $X$  and $Y$ are sequentially $\mathcal{I}^{*}$- compact.
\end{proof}

\begin{theorem}
Let $X$ and $Y$ be topological spaces. If $X$ is sequentially $\mathcal{I}^{*}$-compact and $f:X\rightarrow Y$ is sequentially $\mathcal{I}^{*}$-mapping, then $f(X)$ is sequentially $\mathcal{I}^{*}$-compact.
\end{theorem}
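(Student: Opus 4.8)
The plan is to mimic the classical argument that a continuous image of a sequentially compact space is sequentially compact, working throughout with $\mathcal{I}^{*}$-convergence in place of ordinary convergence and reading ``sequentially $\mathcal{I}^{*}$-mapping'' as sequentially $\mathcal{I}^{*}$-continuous in the sense of Definition of the previous section. First I would take an arbitrary sequence $(y_{n})$ in $f(X)$ and, using that each point of $f(X)$ has a preimage, choose for every $n$ some $x_{n}\in X$ with $f(x_{n})=y_{n}$, thereby producing a sequence $(x_{n})\subset X$.

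Next, since $X$ is sequentially $\mathcal{I}^{*}$-compact, the sequence $(x_{n})$ admits a subsequence $(x_{n_{k}})$ that is $\mathcal{I}^{*}$-convergent to some point $x_{0}\in X$. The key move is then to regard $(x_{n_{k}})_{k\in\mathbb{N}}$ as a sequence in its own right, indexed by $k$, so that the hypothesis of sequential $\mathcal{I}^{*}$-continuity applies directly: from $x_{n_{k}}\overset{\mathcal{I}^{*}}{\rightarrow}x_{0}$ I would conclude $f(x_{n_{k}})\overset{\mathcal{I}^{*}}{\rightarrow}f(x_{0})$, that is $y_{n_{k}}\overset{\mathcal{I}^{*}}{\rightarrow}f(x_{0})$. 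Since $f(x_{0})\in f(X)$, the subsequence $(y_{n_{k}})$ is an $\mathcal{I}^{*}$-convergent subsequence of $(y_{n})$ whose limit lies in $f(X)$, which is exactly what the definition of sequential $\mathcal{I}^{*}$-compactness of $f(X)$ demands.

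The step I expect to be the main (though mild) obstacle is justifying the application of sequential $\mathcal{I}^{*}$-continuity to the subsequence: one must observe that the $\mathcal{I}^{*}$-convergence of $(x_{n_{k}})$, viewed as a sequence over the fresh index variable $k\in\mathbb{N}$, is precisely the notion the continuity hypothesis refers to, because the associated filter $\mathscr{F}(\mathcal{I})$ lives on the index set $\mathbb{N}$ and re-indexing by $k$ keeps us inside the same framework. Once this is granted, the argument closes immediately, and no compatibility condition between the ideal $\mathcal{I}$ and the choice of subsequence is required beyond the fact that a subsequence is again a sequence on $\mathbb{N}$. I would also remark that the selection of preimages $x_{n}$ is the only place an (at most countable) choice enters, and it poses no difficulty.
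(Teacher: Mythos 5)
Your proof is correct, and it is the standard lift--extract--push-forward argument one would expect here; note that the paper actually states this theorem without supplying any proof, so your write-up fills that gap rather than diverging from an existing argument. Your reading of ``sequentially $\mathcal{I}^{*}$-mapping'' as sequentially $\mathcal{I}^{*}$-continuous is the only sensible one given the surrounding definitions, and your care about re-indexing the subsequence by $k$ is exactly the right point to flag, since the paper's own final example shows that subsequences do not automatically inherit $\mathcal{I}^{*}$-convergence, so one must apply the continuity hypothesis to the subsequence as a fresh sequence over $\mathbb{N}$ rather than to the original one.
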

\begin{theorem}
Let $\mathcal{I}$ and $\mathcal{J}$ be two ideal of $\Bbb{N}$ such that $\mathcal{I}\subset \mathcal{J}$ and $X$ be a topological space. If $U\subset X$ is $\mathcal{J}^{*}$- open then it is $\mathcal{I}^{*}$-open.
\end{theorem}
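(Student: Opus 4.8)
The plan is to reduce everything to the monotonicity of the $\mathcal{I}^{*}$-closure operator in the ideal, which is exactly what Theorem \ref{theorem 2} provides. Recall that $U$ is $\mathcal{J}^{*}$-open precisely when its complement $X-U$ is $\mathcal{J}^{*}$-closed, i.e. $\overline{X-U}^{\mathcal{J}^{*}}=X-U$; and to show $U$ is $\mathcal{I}^{*}$-open I must show $X-U$ is $\mathcal{I}^{*}$-closed, i.e. $\overline{X-U}^{\mathcal{I}^{*}}=X-U$. Since $A\subseteq \overline{A}^{\mathcal{I}^{*}}$ always holds, the only inclusion that needs work is $\overline{X-U}^{\mathcal{I}^{*}}\subseteq X-U$.

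First I would establish the general containment $\overline{A}^{\mathcal{I}^{*}}\subseteq \overline{A}^{\mathcal{J}^{*}}$ for every $A\subseteq X$. This is immediate from the definition of $\mathcal{I}^{*}$-closure together with Theorem \ref{theorem 2}: if $x\in \overline{A}^{\mathcal{I}^{*}}$, there is a sequence $(x_{n})\subset A$ with $x_{n}\overset{\mathcal{I}^{*}}{\rightarrow}x$, and because $\mathcal{I}\subseteq \mathcal{J}$ Theorem \ref{theorem 2} upgrades this to $x_{n}\overset{\mathcal{J}^{*}}{\rightarrow}x$, whence $x\in \overline{A}^{\mathcal{J}^{*}}$. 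The direction of this inclusion is the point to watch: passing to the larger ideal enlarges the family of convergent sequences, hence enlarges the closure, so the finer-ideal closure sits inside the coarser-ideal closure, which is exactly what I need.

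Applying this with $A=X-U$ and then using the hypothesis that $X-U$ is $\mathcal{J}^{*}$-closed yields
$$\overline{X-U}^{\mathcal{I}^{*}}\subseteq \overline{X-U}^{\mathcal{J}^{*}}=X-U.$$
Combined with $X-U\subseteq \overline{X-U}^{\mathcal{I}^{*}}$ this gives $\overline{X-U}^{\mathcal{I}^{*}}=X-U$, so $X-U$ is $\mathcal{I}^{*}$-closed and therefore $U$ is $\mathcal{I}^{*}$-open, completing the argument.

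I do not anticipate a genuine obstacle here, since Theorem \ref{theorem 2} does all the analytic work and the rest is a formal manipulation of the closure definition; the only care required is in keeping the set inclusions pointing the correct way. If one wished to phrase the proof without reference to closures, one could argue directly via the sequential characterization of $\mathcal{I}^{*}$-openness, but the closure route above is the shortest and relies only on results already proved in the excerpt.
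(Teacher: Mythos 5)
Your proof is correct and follows essentially the same route as the paper: both arguments take a point of $\overline{X-U}^{\mathcal{I}^{*}}$, use Theorem \ref{theorem 2} to upgrade the witnessing $\mathcal{I}^{*}$-convergent sequence to a $\mathcal{J}^{*}$-convergent one, and conclude from the $\mathcal{J}^{*}$-closedness of $X-U$. Your version is slightly cleaner in that it isolates the monotonicity $\overline{A}^{\mathcal{I}^{*}}\subseteq\overline{A}^{\mathcal{J}^{*}}$ as a general fact, and it avoids the paper's small slips (prematurely asserting $x\in X-U$ and misstating the conclusion as ``$U$ is $\mathcal{J}^{*}$-open'').
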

\begin{proof}
Let $U$ be $\mathcal{J}^{*}$-open then $X-U$ is $\mathcal{J}^{*}$-closed and $X-U= \overline {X-U}^{\mathcal{J}^{*}}$ holds. 

We must to prove $\overline{X-U}^{\mathcal{I}^{*}}\subset X-U$. Let $x \in \overline{X-U}^{\mathcal{I}^{*}}$ be an arbitrary point, then there exists a sequence $(x_{n})\subset X-U$ such that $(x_{n})$ is $\mathcal{I}^{*}$-convergent to a point $x \in X-U$, then by Theorem \ref{theorem 2} the sequence  $(x_{n})$, $\mathcal{J}^{*}$ converges to $x$. Hence, $x \in \overline{X-U}^{\mathcal{J}}=X-U$ this implies that  $x \in X-U$ and $U$ is $\mathcal{J}^{*}$-open.
\end{proof}

 \begin{corollary}
 Let $\mathcal{I}$ and $\mathcal{J}$ be two ideal of $\mathbb{N}$ such that $\mathcal{I}\subset \mathcal{J}$. Then, the topological space $\tau _\mathcal{J}^{*}$ is finer then the topological space $\tau _\mathcal{I}^{*}$.
 
 \end{corollary}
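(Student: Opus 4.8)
The plan is to obtain this corollary as an immediate consequence of the Theorem proved just above it, since the corollary merely restates that theorem at the level of the two topologies instead of at the level of individual open sets.

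First I would recall that both families being compared are genuine topologies on $X$: by the theorem establishing that $\tau_{\mathcal{I}^*}$ is a topology, the collection $\tau_{\mathcal{I}^*}=\{U\in P(X):U\text{ is }\mathcal{I}^*\text{-open}\}$ of all $\mathcal{I}^*$-open sets and, likewise, the collection $\tau_{\mathcal{J}^*}$ of all $\mathcal{J}^*$-open sets each form a topology on $X$. Comparing these two topologies therefore reduces to comparing the two families of subsets of $X$ under inclusion.

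Next I would apply the immediately preceding Theorem, whose hypothesis $\mathcal{I}\subset\mathcal{J}$ is exactly the hypothesis of the corollary, to an arbitrary $U\in\tau_{\mathcal{J}^*}$. By definition such a $U$ is $\mathcal{J}^*$-open, so that theorem gives that $U$ is $\mathcal{I}^*$-open, i.e. $U\in\tau_{\mathcal{I}^*}$. Since $U$ was arbitrary, this yields the inclusion $\tau_{\mathcal{J}^*}\subseteq\tau_{\mathcal{I}^*}$, which is precisely the comparison recorded by the statement that $\tau_{\mathcal{J}^*}$ is finer than $\tau_{\mathcal{I}^*}$. I do not expect any genuine obstacle here, which is why the result is stated as a corollary: the entire content is the elementwise transfer already supplied by the preceding theorem, and the only remaining points are the purely bookkeeping ones that the corollary's hypothesis coincides with the theorem's and that the ordering of the two topologies is read off from this set inclusion.
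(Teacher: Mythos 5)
Your route coincides with the paper's: the corollary carries no independent proof there either, being exactly the elementwise application of the immediately preceding theorem that you describe, and your derivation of the inclusion $\tau_{\mathcal{J}^{*}}\subseteq\tau_{\mathcal{I}^{*}}$ is correct and complete.

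The one genuine problem is your closing sentence, where you identify that inclusion with the claim that $\tau_{\mathcal{J}^{*}}$ is finer than $\tau_{\mathcal{I}^{*}}$. Under the standard convention --- which the paper itself uses elsewhere --- a topology $\tau_{1}$ is finer than $\tau_{2}$ precisely when $\tau_{2}\subseteq\tau_{1}$, i.e.\ when $\tau_{1}$ has \emph{more} open sets. The inclusion you proved, $\tau_{\mathcal{J}^{*}}\subseteq\tau_{\mathcal{I}^{*}}$, therefore says that $\tau_{\mathcal{I}^{*}}$ is finer than $\tau_{\mathcal{J}^{*}}$, the opposite of the corollary's literal wording; the wording in the paper is itself a slip, which you have endorsed rather than caught. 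Compare the earlier part of the paper: from ``every $\mathcal{I}$-open set is $\mathcal{I}^{*}$-open,'' i.e.\ $\tau_{\mathcal{I}}\subseteq\tau_{\mathcal{I}^{*}}$, the authors correctly conclude $\tau_{\mathcal{I}}\prec\tau_{\mathcal{I}^{*}}$, with the larger family the finer one. A sanity check confirms the direction here: enlarging the ideal ($\mathcal{I}\subseteq\mathcal{J}$) enlarges the class of $\mathcal{J}^{*}$-convergent sequences (the paper's Theorem that $\mathcal{I}^{*}$-convergence implies $\mathcal{J}^{*}$-convergence), so ``$\mathcal{J}^{*}$-closed'' becomes a more demanding condition and there are \emph{fewer} $\mathcal{J}^{*}$-open sets, not more; in the extreme case $\mathcal{I}=\mathcal{I}_{Fin}$ one recovers the usual sequential topology, which should be the finest of these, consistent with $\tau_{\mathcal{J}^{*}}\subseteq\tau_{\mathcal{I}_{Fin}^{*}}$. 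So your mathematics is exactly what is needed, but the final step should read: the inclusion $\tau_{\mathcal{J}^{*}}\subseteq\tau_{\mathcal{I}^{*}}$ shows that $\tau_{\mathcal{I}^{*}}$ is finer than $\tau_{\mathcal{J}^{*}}$, and the corollary's statement should be emended accordingly rather than taken at face value.
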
  
 \begin{theorem} \label{17}
     Let $\mathcal{I}$ be an ideal and $X$ be topological space, if every sub-sequence $(x_{n_{k}})$ of $(x_{n}) \subseteq X$ is $\mathcal{I}^{*}$- convergent to a point $x_{0} \in X$ then $(x_{n})$ is $\mathcal{I}^{*}$- convergent to $x_{0}$.
 \end{theorem}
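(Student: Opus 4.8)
The plan is to exploit the elementary fact that a sequence is a subsequence of itself. Taking the index map $n_{k}=k$ for every $k\in\mathbb{N}$ exhibits $(x_{n})$ as one of its own subsequences $(x_{n_{k}})$. The hypothesis asserts that \emph{every} subsequence of $(x_{n})$ is $\mathcal{I}^{*}$-convergent to $x_{0}$, so applying it to this particular subsequence gives at once that $(x_{n})\overset{\mathcal{I}^{*}}{\rightarrow}x_{0}$, which is exactly the conclusion. Thus no topological or ideal-theoretic machinery is needed beyond this single observation.

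Should one prefer to argue straight from the definition rather than invoke the ``subsequence of itself'' convention, I would instead unpack $\mathcal{I}^{*}$-convergence directly. To establish $(x_{n})\overset{\mathcal{I}^{*}}{\rightarrow}x_{0}$ one must exhibit a set $M\in\mathscr{F}(\mathcal{I})$, written $M=\{m_{1}<m_{2}<\cdots\}$, such that for every neighborhood $U$ of $x_{0}$ there is $N\in\mathbb{N}$ with $x_{m_{k}}\in U$ for all $m_{k}>N$. I would select any single subsequence covered by the hypothesis and take as $M$ the filter set witnessing its $\mathcal{I}^{*}$-convergence; this same $M$ then witnesses $\mathcal{I}^{*}$-convergence of $(x_{n})$. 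Since $\mathcal{I}$ is admissible, one may adjust $M$ by a finite set without leaving $\mathscr{F}(\mathcal{I})$, so the argument is insensitive to whether finitely many initial indices are included or omitted.

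The single point requiring care---and the only thing I would regard as a potential obstacle---is the convention that the identity reindexing $n_{k}=k$ counts as a subsequence, so that $(x_{n})$ itself falls within the scope of the hypothesis. Once this convention is adopted the statement is immediate, and the theorem reduces to a restatement of the hypothesis applied to the trivial subsequence.
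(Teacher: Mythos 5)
Your argument is correct, and it is genuinely different from the paper's. You observe that $(x_n)$ is a subsequence of itself (via the identity reindexing $n_k=k$), so the hypothesis applied to that one subsequence already yields the conclusion; the theorem is then a tautology. The paper instead argues by contradiction: it negates the definition of $\mathcal{I}^{*}$-convergence of $(x_n)$, extracts from that negation indices $n_k$ with $x_{n_k}\notin U$ for a suitable neighborhood $U$ of $x_0$, and thereby produces a subsequence that cannot be $\mathcal{I}^{*}$-convergent to $x_0$, contradicting the hypothesis. Your route buys brevity and sidesteps the quantifier bookkeeping in negating $\mathcal{I}^{*}$-convergence (which the paper handles somewhat loosely, e.g.\ the placement of ``for any neighborhood $U$''); the paper's route proves the contrapositive directly and so does not depend on the convention that a sequence counts among its own subsequences --- which is the one assumption your proof cannot do without, and which you rightly flag. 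One caution on your fallback argument in the second paragraph: if you pick an arbitrary \emph{proper} subsequence $(x_{n_k})$, its witnessing set $M\in\mathscr{F}(\mathcal{I})$ lives in the subsequence's index set, and its image under $k\mapsto n_k$ need not belong to $\mathscr{F}(\mathcal{I})$, so ``this same $M$ witnesses convergence of $(x_n)$'' is only automatic for the identity subsequence; in other words, the fallback is not really an alternative to, but a restatement of, your main observation.
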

\begin{proof}
    Let $(x_{n)}$ is not $\mathcal{I}^{*}$-convergent to point $x_{0}$ then for all $M \in \mathscr{F}(\mathcal{I})$ and for all $N \in \mathbb{N}$ there exists $n_{k} > N$ such that $x_{n_{k}} \notin U$, where $U$ is any neighborhood of $x_{0}$, if we take $N=1$ then there exists the sub-sequence $(x_{n_{k}}) \notin U$, for all $n_{k} > 1$ this means that there exists a sub-sequence of $(x_{n})$ which is not converging to the point of $x_{0}$ which is contradiction.
\end{proof}
The converse of  Theorem \ref{17} is not true as follow:
\begin{example}
    Let consider $(\mathbb {R}, \tau_{e})$, the set of real numbers with it's usual topology, let $\mathcal{I}$ be any ideal and $K \in \mathscr{F}(\mathcal{I})$ define a sequence as 
    $$y_{n}=\left\{\begin{matrix}
2^n & n \notin K\\ 
\frac{1}{n} &  n\in K 
\end{matrix}\right.$$\\
$(y_{n}) $ is $\mathcal{I}^{*}$-convergent to zero but the subsequence $y_{n_{k}} = 2^{n} , n\notin K$ is not $\mathcal{I}^{*}$-convergent. 
\end{example}
\begin{conclusion}
	
In the paper, we defined the $\mathcal{I}^{*}$-sequential topology on a topological space $(X,\tau)$ and we proved that $\mathcal{I}^{*}$-sequential topology is finer then $\mathcal{I}$-sequential topology. That is; $\tau_{\mathcal{I}}\prec \tau_{\mathcal{I}^{*}}$. 

We also observed that under the condition of if the space $X$ has no limit point and $\mathcal{I}$ be an admissible ideal then, $\mathcal{I}$-sequentially topology and $\mathcal{I}^{*}$-sequentially topology are coincide, i.e $\tau_{\mathcal{I}}= \tau_{\mathcal{I}^{*}}$. 

Also,  Lemma 2 in the paper \cite{ab} stated that "Every subsequence of an $\mathcal{I}$-convergent sequence in a topological space is also $\mathcal{I}$-convergent" but in Example 3 of this paper we saw that it is not true in $\mathcal{I}^{*}$-sequentially topological space.

As a continuation of this study, some questions can be asked: 

$Q1:$ Is there a finer topology than $\mathcal{I}^{*}$-sequentially topology space?

$Q2:$  Is there any topological  between $\mathcal{I}$-sequential topological space and $\mathcal{I}^{*}$-sequential topological space?

\end{conclusion}


\begin{thebibliography}{30}
\bibitem{ab}
A.Blali, A.El Amrani, R.A.Hasani, A.Razouki, On the uniqueness of $\mathcal{I}$-limits of sequences, Siberian Electronic Mathematical reports, 18.2, (2021), 744-757,    
\bibitem{1}
 A.K. Banerjee, A. Banerjee, $\mathcal{I}$-convergence classes of sequences and nets in topological spaces,  Jordan Journal of Mathematics and Statistics (JJMS), 11(1), 2018, 13 - 31.

 \bibitem{2}
		P. Das, Some further results on ideal convergence in topological spaces, Topology Appl.,
		159:10-11 (2012), 2621-2626.
\bibitem{3}
P. Das, P. Kostyrko, W. Wilczynski, P. Malik, $\mathcal{I}$ and $\mathcal{I}^{*}$-convergence of double sequences, Math. Slovaca, 58:5 (2008), 605-620. 

\bibitem {4} 
H. Fast, Sur la convergence statistique, Colloq. Math., 2, (1951), 241-244.

\bibitem{5}
R. Filipow, N. Mrozek, I. Rec law, P. Szuca, Ideal convergence of bounded sequences, J. Symb. Log., 72:2 (2007), 501-512. 

\bibitem{6}
D. Georgiou, S. Iliadis, A. Megaritis, G. Prinos, Ideal-convergence classes, Topology Appl.,
222 (2017), 217-226.

\bibitem{7}
 B. Hazarika, A. Esi, Some ideal-convergent generalized difference sequences in a locally convex
space defined by a Musielak-Orlicz function, Mat. Stud., 42:2 (2014), 195-208. 
\bibitem{8}
P. Kostyrko, M. Macaj, T. Salat, M. Sleziak, $\mathcal{I}$-convergence and extremal $\mathcal{I}$-limit points,
Math. Slovaca, 55:4 (2005), 443-464. 
\bibitem{9}
 P. Kostyrko, T.Salat,W. Wilczynki, $\mathcal{I}$-convergence, Real Anal. Exch., 26:2 (2000), 669-685.

\bibitem{10}
 B.K. Lahiri, P. Das, $\mathcal{I}$ and $\mathcal{I}^{*}$-convergence in topological spaces, Math. Bohem., 130:2 (2005),
153-160. 

\bibitem{11}
A. Nabiev, S. Pehlivan, M.Gürdal, On $\mathcal{I}$-Cauchy sequences, Taiwanese J. Math., 11:2 (2007),
569-576. 

\bibitem{newcomb}
R.L. Newcomb, Topologies which are compact modulo an ideal, Ph.D.Thesis, Uni. Of Cal. At Santa Barbara, (1967)
 
\bibitem{12} 
S.K. Pal, $\mathcal{I}$-sequential topological spaces, Appl. Math. E-Notes, 14 (2014), 236-241. 

\bibitem{rancin}
D.V. Rancin, Compactness modulo an ideal, Soviet Math. Dokl., 13,(1972), 193-197

\bibitem{13}
V. Renukadevi, B. Prakash, $\mathcal{I}$-Freched-Urysohn spaces, Math. Morav., 20:2 (2016), 87-97.


\bibitem{14}
T. Salat, B.C. Tripathy, M. Ziman, On some properties of $\mathcal{I}$-convergence, Tatra Mt. Math.
Publ, 28:2 (2004), 279-286. 


\bibitem{15}
T. Salat, B.C. Tripathy, M. Ziman,  On I-convergence field, Ital. J. Pure Appl. Math, 17
(2005), 45-54.
\bibitem{16}
 I. Schoenberg, The integrability of certain functions and related summability methods. I,II,
Am. Math. Mon., 66 (1959), 361-775. 

\bibitem{st}
H. Steinhaus, Sur la convergence ordinaire et la convergence asymptotique, Colloq. Math.,
2, (1951), 73-74.
\bibitem{17}
B.C. Tripathy, M. Sen, S. Nath,$\mathcal{I}$-convergence in probabilistic n-normed space, Soft Comput.,
16:6 (2012), 1021-1027. 
\bibitem{18}
X. Zhou, L. Liu, S. Lin, On topological spaces defined by $\mathcal{I}$-convergence, Bull. Iran. Math.
Soc., 46:3 (2020), 675-692. 

\end{thebibliography}
\end{document}